\theoremstyle{plain}
\newtheorem{theorem}{Theorem}[section]
\newtheorem{prop}[theorem]{Proposition}
\newtheorem{lemma}[theorem]{Lemma}
\newtheorem{corol}[theorem]{Corollary}
\theoremstyle{definition}
\newtheorem{definition}{Definition}[section]
\newtheorem{remark}{\textnormal{\textbf{Remark}}}[section]
\newtheorem*{remark*}{\textnormal{\textbf{Remark}}}
\def\vint{\mathop{\mathchoice%
          {\setbox0\hbox{$\displaystyle\intop$}\kern 0.22\wd0%
           \vcenter{\hrule width 0.6\wd0}\kern -0.82\wd0}%
          {\setbox0\hbox{$\textstyle\intop$}\kern 0.2\wd0%
           \vcenter{\hrule width 0.6\wd0}\kern -0.8\wd0}%
          {\setbox0\hbox{$\scriptstyle\intop$}\kern 0.2\wd0%
           \vcenter{\hrule width 0.6\wd0}\kern -0.8\wd0}%
          {\setbox0\hbox{$\scriptscriptstyle\intop$}\kern 0.2\wd0%
           \vcenter{\hrule width 0.6\wd0}\kern -0.8\wd0}}%
          \mathopen{}\int}
\def\vintslides{\mathop{\mathchoice%
          {\setbox0\hbox{$\displaystyle\intop$}\kern 0.22\wd0%
           \vcenter{\hrule height 0.04em width 0.6\wd0}\kern -0.82\wd0}%
          {\setbox0\hbox{$\textstyle\intop$}\kern 0.2\wd0%
           \vcenter{\hrule height 0.04em width 0.6\wd0}\kern -0.8\wd0}%
          {\setbox0\hbox{$\scriptstyle\intop$}\kern 0.2\wd0%
           \vcenter{\hrule height 0.04em width 0.6\wd0}\kern -0.8\wd0}%
          {\setbox0\hbox{$\scriptscriptstyle\intop$}\kern 0.2\wd0%
           \vcenter{\hrule height 0.04em width 0.6\wd0}\kern -0.8\wd0}}%
          \mathopen{}\int}
\def \div {{\rm div}}
\newcommand*{\norm}[1]{\left\Vert{#1}\right\Vert}
\newcommand*{\abs}[1]{\left\vert{#1}\right\vert}
\newcommand*{\Lip}{\mathrm{Lip}\,}
\newcommand*{\dist}{\mathrm{dist}}
\newcommand{\spt}{\operatorname{spt}}
\newcommand{\N}{{\mathbb N}}
\newcommand{\R}{{\mathbb R}}
\newcommand{\ep}{\epsilon}
\newcommand{\Om}{\Omega}
\newcommand{\Hei}{{\mathbb{H}}_{1}}
\newcommand{\Hein}{{\mathbb H}_{n}}
\newcommand{\LIP}{\operatorname{LIP}}
\newcommand{\Ha}{\mathcal{H}}
\newcommand{\ud}{\mathrm {d}}
\definecolor{blau}{rgb}{0.1,0.0,0.9}
\newcommand{\red}{\color{red}}
\newcounter{komcounter}
\numberwithin{komcounter}{section}
\newcommand{\kom}[1]{}
\renewcommand{\kom}[1]{$\backslash$ {\bf  \red #1}$\backslash$}
\newcommand{\komTT}[1]{{\bf \red [\refstepcounter{komcounter}\thekomcounter (by Tomasz): #1]}}
\title[AMV harmonic functions in subriemannian and RCD settings]{Asymptotically mean value harmonic functions in subriemannian and RCD settings}
\author[Tomasz Adamowicz]{Tomasz Adamowicz{\small$^1$}}
\address{T.A.: Institute of Mathematics, Polish Academy of Sciences,
\'Sniadeckich 8, Warsaw, 00-656, Poland\/}
\email{tadamowi@impan.pl}
\author[Antoni Kijowski]{Antoni Kijowski}
\address{A.K.: Analysis on Metric Spaces Unit, Okinawa Institute of Science and Technology Graduate University,
Okinawa, 904-0495, Japan}
\email{antoni.kijowski@oist.jp}
\author[Elefterios Soultanis]{Elefterios Soultanis{\small$^2$}}
\address{E.S.: Radboud University, IMAPP, Heyendaalseweg 135, 6525 AJ Nijmegen, Netherlands\/}
\email{elefterios.soultanis@gmail.com}
\begin{document}

\begin{abstract}
We consider weakly and strongly asymptotically mean value harmonic (amv-harmonic) functions on subriemannian and RCD settings. We demonstrate that, in non-collapsed RCD-spaces with vanishing metric measure boundary, Cheeger harmonic functions are weakly amv-harmonic and that, in Carnot groups, weak amv-harmonicity equivalently characterizes harmonicity in the sense of the sub-Laplacian.

In homogeneous Carnot groups of step $2$, we prove a Blaschke--Privaloff--Zaremba type theorem. Similar results are discussed in the settings of Riemannian manifolds and for Alexandrov surfaces.
%
\newline
\newline \emph{Keywords}: Asymptotic mean value property, Blaschke-Privaloff-Zaremba theorem, Carnot groups, harmonic functions, Dirichlet forms, Korevaar--Schoen energy, mean value property, metric measure boundary, Sobolev spaces, Synthetic curvature bounds.
\newline
\newline
\emph{Mathematics Subject Classification (2010):} Primary: 31E05; Secondary: 53C23, 35R03.
\end{abstract}

\maketitle

\footnotetext[1]{T. Adamowicz was supported by a grant of National Science Center, Poland (NCN),
 UMO-2017/25/B/ST1/01955.}
\footnotetext[2]{E. Soultanis was supported by Swiss National Foundation grant  no.  182423.}

\setcounter{tocdepth}{1}
	
\section{Introduction}

%


A key problem in the theory of harmonic functions is to find various characterizations of harmonicity. A particularly strong example of this is the mean value property in the Euclidean setting. Unfortunately, harmonic functions rarely enjoy the mean value property outside Euclidean space. For example, in the setting of Riemannian manifolds it is known to hold on manifolds which are either flat or rank-one symmetric  and in dimensions 2-5, see the related discussion on the Lichnerowicz conjecture in Example 4 in~\cite{agg} and references therein. However a suitable variant beyond the Euclidean setting turns out to be the \emph{asymptotic mean value (amv) property}
\begin{align}\label{eq:rlap}
	\Delta_ru\stackrel{r\to 0}{\longrightarrow }0,\quad \Delta_ru(x):=\frac{u_{B_r(x)}-u(x)}{r^2}.
\end{align}

In Riemannian manifolds the convergence in \eqref{eq:rlap} is locally uniform for a harmonic function $u$. However the \emph{$r$-laplacian} in \eqref{eq:rlap} can be defined for locally integrable functions on any \emph{metric measure space} $X=(X,d,\mu)$, where $u_{B_r(x)}$ stands for the mean-value of $u$ over a ball $B_r(x)$ with radius $r>0$ centered at $x\in X$. Consequently the amv property can be formulated in a very general setting (note that the mean value (mv) property can be also expressed in the same generality in terms of vanishing $r$-laplacian, see see Definition \ref{main-def1} and also \cite{agg, gg}). Let us briefly mention that the $r$-laplacian arises in approximation problems of Riemannian manifolds by graphs~\cite{bik}, in geometric group theory~\cite{kleiner}, and in relation with functions of bounded variation~\cite[Theorem 1]{co}. It moreover appears in connection with $p$-harmonic type equations and stochastic games~\cite{mpr, mpr2, arlo}, as well as in the setting of Carnot groups; the $r$-laplacian and its relations to the subelliptic harmonic functions have been studied, for instance, in~\cite{adw, flm, fp}.

For recent work on amv functions on general metric measure spaces we refer the reader to~\cite{mi-te, mi-te2} and in particular to~\cite{aks} where we focus on functions with the amv property in doubling metric measure spaces and show their local H\"older regularity and the mv property for their blow-ups. As an important special case, in~\cite{aks} we consider weighted Euclidean setting and find an elliptic PDE satisfied by amv-harmonic functions.


\subsection{Main results}
In this paper we continue the investigations of and complement results from~\cite{aks} by considering functions with the amv property in subriemannian and RCD settings. As in \cite{aks} we consider two notions of \emph{asymptotically mean value harmonic} (amv-harmonic) functions, arising from different ways to interpret the limit in \eqref{eq:rlap}. More precisely, we define \emph{strong} and \emph{weak} amv-harmonicity as follows. 
\begin{definition}[see Definition 1.1 in~\cite{aks}]\label{main-def1}
	Let $X=(X,d,\mu)$ be a metric measure space.
	\begin{itemize}
		\item[(1)] A function $u\in L^1_{loc}(X)$ is \emph{strongly amv-harmonic}, if
		\[
		\lim_{r\to 0}\|\Delta_r u\|_{L^\infty(K)}=0
		\]
		for any compact set $K\subset X$;
		\item[(2)]A function $u\in L^2_{loc}(X)$ on a metric measure space $X$ is \emph{weakly amv-harmonic} if
		\begin{equation*}
		\lim_{r\to 0}\int_X\varphi\Delta_ru\ud\mu=0
		\end{equation*}
		for every compactly supported Lipschitz function $\varphi$.
	\end{itemize}
\end{definition}

To state our first result we refer to Section \ref{sec:rcd} and the references therein for the theory of RCD-spaces, which include GH-limits of Riemannian manifolds with uniform Ricci lower bounds and dimension upper bounds (i.e. \emph{Ricci limit spaces}) as well as Alexandrov spaces equipped with the Hausdorff measure of appropriate dimension. 

We refer to Definition \ref{def:boundary} for \emph{the boundary} $\partial X$ of an RCD$(K,N)$-space $X=(X,d,\mu)$ and recall $X$ is \emph{non-collapsed} if $\mu=\Ha^N$, the $N$-Hausdorff measure. 
Below and in what follows, \emph{harmonic} functions mean continuous local minimizers of the Cheeger energy, cf. Definition \ref{def:ch-harm}.

\begin{theorem}\label{thm:ncRCD}
	Let $\Omega\subset X$ be a domain in a non-collapsed $RCD(K,N)$-space $X=(X,d,\Ha^N)$ and suppose $\Omega\cap \partial X=\varnothing$. Then (Cheeger) harmonic functions in $\Omega$ are weakly amv-harmonic in $\Omega$.
\end{theorem}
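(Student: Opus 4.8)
The plan is to transfer the averaging from $u$ onto the (Lipschitz) test function and to recognise the resulting limit as a multiple of the Cheeger Dirichlet form $\int\langle\nabla u,\nabla\varphi\rangle\,\ud\mu$, which vanishes precisely because $u$ is Cheeger harmonic. Fix $\varphi\in\Lip(\Om)$ with $K:=\spt\varphi\subset\Om$ compact. Since $\Om\cap\partial X=\varnothing$, for all small $r>0$ the $r$-neighbourhood of $K$ stays inside $\Om$ and avoids $\partial X$; this is the only place the hypothesis on the boundary is used, but it is decisive.

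Writing $u_{B_r(x)}=\frac{1}{\mu(B_r(x))}\int_{B_r(x)}u\,\ud\mu$ and using Fubini (the set $\{d(x,y)<r\}$ is symmetric), one has
\begin{align*}
\int_X\varphi\,\Delta_ru\,\ud\mu=\frac{1}{r^2}\int_X\int_{B_r(x)}\frac{\varphi(x)\bigl(u(y)-u(x)\bigr)}{\mu(B_r(x))}\,\ud\mu(y)\,\ud\mu(x).
\end{align*}
Symmetrising this one-sided expression against the (non-symmetric) weight $w(x)=\mu(B_r(x))^{-1}$ produces the identity $\int_X\varphi\,\Delta_ru\,\ud\mu=-\tfrac12 S_r(u,\varphi)+R_r$, where
\begin{align*}
S_r(u,\varphi):=\frac1{r^2}\int_X\int_{B_r(x)}\frac{\bigl(u(y)-u(x)\bigr)\bigl(\varphi(y)-\varphi(x)\bigr)}{\mu(B_r(x))}\,\ud\mu(y)\,\ud\mu(x)
\end{align*}
is a Korevaar--Schoen type nonlocal energy, and
\begin{align*}
R_r=-\frac1{2r^2}\int_X\int_{B_r(x)}\Bigl(\tfrac{1}{\mu(B_r(y))}-\tfrac{1}{\mu(B_r(x))}\Bigr)\bigl(u(y)-u(x)\bigr)\varphi(x)\,\ud\mu(y)\,\ud\mu(x)
\end{align*}
collects the error caused by the asymmetry of $w$.

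It then remains to (i) show $R_r\to0$ and (ii) identify $\lim_{r\to0}S_r(u,\varphi)$. For (ii) I would invoke the convergence of Korevaar--Schoen energies to the Cheeger energy, valid on RCD (indeed on doubling Poincar\'e) spaces: as a local minimiser of the Cheeger energy $u$ lies in $N^{1,2}_{\mathrm{loc}}(\Om)$ and is in fact locally Lipschitz, the integrand concentrates on the diagonal, and at $\mu$-a.e.\ $x$ the inner average converges to the quadratic form determined by the tangent. Here \emph{non-collapsedness} is essential: by the structure theory recalled in Section~\ref{sec:rcd}, regular points carry full $\Ha^N$-measure and at each of them the tangent is $\R^N$ with volume density one, so that $\mu(B_r(x))=\om_N r^N(1+o(1))$ and the universal Euclidean constant $\tfrac1{N+2}$ emerges. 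This yields $S_r(u,\varphi)\to\tfrac1{N+2}\int_K\langle\nabla u,\nabla\varphi\rangle\,\ud\mu$, whence
\begin{align*}
\lim_{r\to0}\int_X\varphi\,\Delta_ru\,\ud\mu=-\frac1{2(N+2)}\int_K\langle\nabla u,\nabla\varphi\rangle\,\ud\mu=0,
\end{align*}
the last equality being exactly the weak harmonicity of $u$ tested against $\varphi$.

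The main obstacle is (i), and it is here that both structural hypotheses are consumed. The factor $\mu(B_r(y))^{-1}-\mu(B_r(x))^{-1}$ in $R_r$ is governed by the oscillation at scale $r$ of the normalised volume $x\mapsto\mu(B_r(x))/(\om_N r^N)$; by Bishop--Gromov monotonicity this quantity is bounded and, on the regular set, tends to one, so $R_r$ concentrates on the $\Ha^N$-null singular set and disappears after integration against $\varphi\,\ud\mu$. It is precisely this averaging over a null set that forces the conclusion to be only \emph{weak}. The delicate point is that $\mu(B_r(x))/(\om_N r^N)$ genuinely degenerates near $\partial X$, where balls are ``cut off''; an elementary half-space computation shows that such a boundary layer contributes to $\lim_r\int\varphi\,\Delta_ru$ already at order $r^2$, i.e.\ at the same order as the interior Laplacian, so that a non-vanishing metric measure boundary would leave a surviving boundary term. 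The assumption $\Om\cap\partial X=\varnothing$ together with $\spt\varphi\Subset\Om$ removes this layer entirely, which is what makes $R_r\to0$ and the identification in (ii) legitimate; the local Lipschitz regularity of Cheeger-harmonic functions supplies the domination needed to pass to the limit.
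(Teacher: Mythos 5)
Your overall architecture is the paper's own, after relabeling: your $S_r(u,\varphi)$ equals $2E_r(u,\varphi)$, your identity $\int_X\varphi\,\Delta_ru\,\ud\mu=-\tfrac12 S_r(u,\varphi)+R_r$ is exactly Lemma~\ref{lem:selfadjbasic} (with $R_r=\int_X\varphi(\Delta_ru-\widetilde\Delta_ru)\,\ud\mu$, i.e. the symmetrized $r$-laplacian in disguise), and your step (ii) is Proposition~\ref{prop:modified} combined with the Gigli--Tyulenev identification of the Korevaar--Schoen and Cheeger energies (Proposition~\ref{prop:gigtyu}). That part is essentially sound, with two small inaccuracies: the identification with a single constant and the convergence of the localized energies $e_r(u,\varphi)\,\ud\mu$ is a theorem for RCD spaces (it fails on general doubling Poincar\'e spaces, where one only has comparability), and non-collapsedness is not what produces the constant there --- Proposition~\ref{prop:gigtyu} holds on arbitrary RCD spaces with $c$ depending on the essential dimension. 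Non-collapsedness is consumed entirely in step (i).

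Step (i) is where your argument has a genuine gap. With $u$ locally Lipschitz one gets $|R_r|\le \tfrac12\LIP(u)\int_X|\varphi(x)|\vint_{B_r(x)}\bigl|\delta_r(x,y)\bigr|/r\,\ud\mu(y)\,\ud\mu(x)$, where $\delta_r(x,y)=1-\mu(B_r(x))/\mu(B_r(y))$; note the surviving factor $1/r$. Your mechanism --- Bishop--Gromov boundedness plus $\theta^N=1$ on the regular set, so that ``$R_r$ concentrates on the $\Ha^N$-null singular set and disappears after integration'' --- would at best show $\int|\varphi|\vint_{B_r(x)}|\delta_r|\,\ud\mu\,\ud\mu\to 0$, i.e. the statement \emph{without} the $r^{-1}$. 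What is actually needed is a first-order \emph{rate}: the weak vanishing $\frac{|1-\theta^N_r|}{r}\,\ud\Ha^N\rightharpoonup 0$ near $\spt\varphi$, i.e. locally vanishing metric measure boundary in the sense of Definition~\ref{def:mmbound}. This is how the paper proceeds (Lemma~\ref{lem:nodev}): local Ahlfors regularity gives $|\delta_r(x,y)|\le C\bigl(|\theta^N_r(x)-1|+|\theta^N_r(y)-1|\bigr)$, and the integral is then estimated by $\int(|\varphi|+A_r^*|\varphi|)\,\ud|\mu_r|$ with $\mu_r=\frac{1-\theta^N_r}{r}\,\ud\Ha^N$. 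Bishop--Gromov monotonicity yields only one-sided control and uniform boundedness of $\mu_r$ in total variation, not its weak vanishing, and the $\Ha^N$-a.e. statement $\theta^N=1$ carries no rate in $r$. The inference ``$\Omega\cap\partial X=\varnothing\Rightarrow R_r\to 0$'' is precisely the content of the recent theorem of Bru\'e--Mondino--Semola (\cite[Theorem 1.2]{BMS22}, quoted as Theorem~\ref{thm:boundary_vs_mm-boundary}): if $B_{Cr_0}(x)\cap\partial X=\varnothing$ then $|\mu_r|(B_{r_0}(x))\to 0$. This is a deep input --- the analogous statement for Alexandrov spaces without boundary was a conjecture in \cite{klp17} --- and neither your null-set argument nor an ``elementary half-space computation'' substitutes for it; without it, $\lim_{r\to 0}R_r$ could a priori be the pairing of $\varphi$ against a nonzero weak limit of the measures $\mu_r$. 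To repair your proof, cite Theorem~\ref{thm:boundary_vs_mm-boundary} (after a partition of unity reducing to $\spt\varphi\subset B_{r_0}(x_0)$ with $B_{2Cr_0}(x_0)\cap\partial X=\varnothing$) and run the estimate of Lemma~\ref{lem:nodev}.
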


 In particular, Theorem~\ref{thm:ncRCD} implies that harmonic functions on Alexandrov spaces without boundary are weakly amv-harmonic. 
For locally Lipschitz continuous functions, the converse implication in Theorem \ref{thm:ncRCD} also holds, see Remark \ref{rmk:rcd}. 

It is not known whether harmonic functions are also \emph{strongly} amv-harmonic in the setting of Theorem \ref{thm:ncRCD} (see \cite[Corollary 3.5]{mon-naber19} for a related weaker statement). Such a result would need very good control over the rate of change of the $\Ha^N$-measure of balls centered at nearby points: the presence of a weight (e.g. the setting of \emph{weakly} non-collapsed RCD-spaces, see \cite{gig-dephil18}) rules out the possibility of having even the weak amv property for harmonic functions, see \cite[Section 6]{aks}.

In Carnot groups, homogeneity properties come into play and ensure the good control needed for the strong amv property. Using a second order Taylor expansion our next theorem establishes the equivalence of harmonic and strongly amv-harmonic functions in Carnot groups equipped with homogeneous pseudonorms with sufficiently regular unit balls. 

To state the theorem recall that an $s$-step Carnot group $\mathbb G$ with Lie algebra $\mathfrak{g}=V_1\oplus\ldots \oplus V_s$ can be identified with $\R^m$, where $m=v_1+\cdots+v_s$ and $v_i:=\dim V_i$, for $i=1,\ldots, s$ (see Section~\ref{sec:Carnot} for more details on Carnot groups). We denote by  $z=(z^{(1)},z^{(2)},\ldots,z^{(s)})$ the coordinates in $\mathbb{G}$, where $z^{(i)}\in \R^{v_i}$. We consider a pseudonorm $\rho$ on $\mathbb G$ such that 
\begin{equation}\label{eq:pseudof}
	\rho(z):=F(\|z^{(1)}\|,z'),\quad z=(z^{(1)},z')\in \mathbb G,
\end{equation}
for some $F:\R\times \R^{m-v_1}\to \R$ that is $C^1$ with $\partial_1 F>0$ outside the origin.


\begin{theorem}\label{cor:carnot}
	Let $\Omega\subset \mathbb G$ be an open domain in a Carnot group $\mathbb G$ of homogeneous dimension $Q$ equipped with a  pseudonorm $\rho$ satisfying \eqref{eq:pseudof}. If $\Omega$ denotes the metric measure space $(\Omega,\rho,\Ha^Q)$, where $\Ha^Q$ is the Haar measure on $\mathbb G$, then the following are equivalent for a function $u\in HW^{1,2}_{loc}(\Omega)$.
	\begin{itemize}
		\item[(i)] $u$ is strongly amv-harmonic;
		\item[(ii)] $u$ is weakly amv-harmonic;
		\item[(iii)] $u$ satisfies $\Delta_\mathbb Gu=0$ in $\Omega$.
	\end{itemize}
\end{theorem}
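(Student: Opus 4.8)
The plan is to establish the cycle (i)$\Rightarrow$(ii)$\Rightarrow$(iii)$\Rightarrow$(i). The implication (i)$\Rightarrow$(ii) is immediate: if $\varphi$ is compactly supported and Lipschitz with $K=\spt\varphi$, then $\left|\int_\Omega\varphi\,\Delta_ru\,\ud\mu\right|\le\|\varphi\|_{L^1}\,\|\Delta_ru\|_{L^\infty(K)}\to0$ as $r\to0$. The content of the theorem lies in the other two implications, both of which I would derive from a single asymptotic expansion of the $r$-laplacian in terms of the sub-Laplacian $\Delta_{\mathbb G}=\sum_{i=1}^{v_1}X_i^2$, where $X_1,\dots,X_{v_1}$ is a basis of the horizontal layer $V_1$ and $\mu=\Ha^Q$ is the Haar measure.

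The key step is to show that for every $w\in C^\infty(\Omega)$,
\[
\Delta_rw\xrightarrow{\,r\to0\,}c\,\Delta_{\mathbb G}w\qquad\text{locally uniformly},
\]
where $c=c(\mathbb G,\rho)>0$ depends only on the group and the pseudonorm. I would write $B_r(x)=x\cdot\delta_r(B_1(0))$ and use left-invariance and homogeneity of $\mu$ (so that $\mu(B_r(x))=r^Q\mu(B_1(0))$ is independent of $x$) to express $w_{B_r(x)}=\mu(B_1(0))^{-1}\int_{B_1(0)}w(x\cdot\delta_r\xi)\,\ud\mu(\xi)$. Expanding $w(x\cdot\delta_r\xi)$ into its homogeneous Taylor polynomial and grouping by powers of $r$, the degree-$0$ term is $w(x)$, the degree-$1$ term is $r\sum_i(X_iw)(x)\,\xi^{(1)}_i$, and the degree-$2$ term is $r^2\big(\tfrac12\sum_{i,j}(X_iX_jw)(x)\,\xi^{(1)}_i\xi^{(1)}_j+\sum_k(Z_kw)(x)\,\xi^{(2)}_k\big)$, with $Z_k$ ranging over a basis of $V_2$. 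Since $\rho(z)=F(\|z^{(1)}\|,z')$ depends on $z^{(1)}$ only through $\|z^{(1)}\|$, the unit ball is invariant under rotations of the $z^{(1)}$-coordinates and under $z^{(1)}\mapsto-z^{(1)}$; hence the degree-$1$ integral and the off-diagonal degree-$2$ horizontal integrals vanish, while $\int_{B_1(0)}\xi^{(1)}_i\xi^{(1)}_j\,\ud\mu=\kappa\,\delta_{ij}$ with the same $\kappa>0$ for all $i$. The layer-two contribution $\int_{B_1(0)}\xi^{(2)}_k\,\ud\mu$ vanishes by the symmetry $\rho(z^{-1})=\rho(z)$, i.e.\ invariance of the ball under $\xi\mapsto-\xi$. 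Dividing by $r^2$ and letting $r\to0$ leaves exactly $c\,\Delta_{\mathbb G}w(x)$ with $c=\kappa/(2\mu(B_1(0)))$, the remainder being controlled uniformly on compacta by the derivatives of $w$ up to order three.

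Granting this expansion, (iii)$\Rightarrow$(i) is quick: a function $u\in\HWtwloc(\Omega)$ with $\Delta_{\mathbb G}u=0$ is smooth by Hörmander's hypoellipticity theorem, since the horizontal fields $X_i$ bracket-generate $\mathfrak g$; the expansion then gives $\Delta_ru\to c\,\Delta_{\mathbb G}u=0$ locally uniformly, which is (i). For (ii)$\Rightarrow$(iii) I would use the self-adjointness of the averaging operator: as $\mu(B_r(x))$ is independent of $x$ and $\rho(x^{-1}y)=\rho(y^{-1}x)$, the kernel $\mu(B_r(0))^{-1}\mathbf 1_{\{\rho(x^{-1}y)<r\}}$ is symmetric, so by Fubini $\int_\Omega\varphi\,\Delta_ru\,\ud\mu=\int_\Omega u\,\Delta_r\varphi\,\ud\mu$ for $\varphi\in C_c^\infty(\Omega)$ and $r$ smaller than $\dist(\spt\varphi,\partial\Omega)$. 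Applying the expansion to the smooth function $\varphi$ gives $\Delta_r\varphi\to c\,\Delta_{\mathbb G}\varphi$ uniformly on a fixed compact neighbourhood of $\spt\varphi$, so dominated convergence together with hypothesis (ii) yields $0=\lim_{r\to0}\int_\Omega\varphi\,\Delta_ru\,\ud\mu=c\int_\Omega u\,\Delta_{\mathbb G}\varphi\,\ud\mu$. Since $c>0$ and $\varphi\in C_c^\infty(\Omega)$ is arbitrary, $\Delta_{\mathbb G}u=0$ holds in the distributional sense; integrating by parts (the $X_i$ being formally skew-adjoint for $\mu$) this is exactly the weak equation in (iii).

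I expect the main obstacle to be the careful bookkeeping in the homogeneous Taylor expansion, specifically the verification that the only surviving degree-$2$ contribution is the diagonal horizontal one. Two points are delicate. First, the layer-$\ge2$ first-order terms are of the critical order $r^2$ and must be shown to integrate to zero; this is exactly where the symmetry $\rho(z)=\rho(z^{-1})$ of the pseudonorm enters. Second, one must check that the diagonal coefficients $\int_{B_1(0)}(\xi^{(1)}_i)^2\,\ud\mu$ are equal across $i$ and strictly positive, so that the limit is a nonzero multiple of $\Delta_{\mathbb G}u$ and the equivalence is not vacuous; both facts rest on the rotational invariance of the ball built into \eqref{eq:pseudof}. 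Finally, the uniformity of the remainder required for the strong conclusion (i) follows from interior derivative bounds for $u$ on a slightly larger compact set, available once hypoellipticity provides smoothness.
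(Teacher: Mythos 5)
Your proof is correct, and for the key implication (ii)$\Rightarrow$(iii) --- which the paper itself flags as the novel one --- it takes a genuinely different route. The paper runs everything through Korevaar--Schoen machinery: by \cite{tan} (Proposition \ref{prop:carnot}) the KS-energy density equals $c|\nabla_H u|^2$, so $E^2_{KS}$ is a Dirichlet form whose generator is a constant multiple of $\Delta_{\mathbb G}$; since translation invariance of the Haar measure gives $\widetilde\Delta_r=\Delta_r$, the abstract characterization of local energy minimizers via the symmetrized $r$-laplacian (Proposition \ref{prop:modified}) then yields (ii)$\Leftrightarrow$(iii). You bypass this entirely: you exploit the same two structural facts ($\mu(B_r(x))$ independent of $x$, and $\rho(x^{-1}y)=\rho(y^{-1}x)$) to make $\Delta_r$ self-adjoint directly, move it onto a $C_c^\infty$ test function, apply your locally uniform expansion $\Delta_r\varphi\to c\,\Delta_{\mathbb G}\varphi$, and conclude $\Delta_{\mathbb G}u=0$ distributionally, upgrading to (iii) by H\"ormander hypoellipticity. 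Your expansion is essentially the computation the paper delegates to \cite[Lemmas 2.7 and 4.1]{fp} for (iii)$\Rightarrow$(i) --- first-order and layer-two terms killed by the central symmetry $\xi^{-1}=-\xi$ of the ball, off-diagonal horizontal terms by the rotational invariance built into \eqref{eq:pseudof}, and $\kappa>0$ because the unit ball has nonempty interior --- so your cycle closes with a single analytic ingredient used twice. What each approach buys: yours is more elementary and self-contained, needing neither Tan's theorem nor lower semicontinuity of the energy, and in fact not even the $C^1$ and $\partial_1F>0$ hypotheses on $F$ (only the rotational and central symmetries of the ball), whereas the paper's Dirichlet-form route is precisely what generalizes to the RCD setting of Theorem \ref{thm:ncRCD}, where no group duality is available and $\widetilde\Delta_r\neq\Delta_r$. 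One point you handle implicitly and should keep explicit: in the metric measure space $(\Omega,\rho,\Ha^Q)$ balls near $\partial\Omega$ are truncated, so both the identity $\int\varphi\,\Delta_ru\,\ud\mu=\int u\,\Delta_r\varphi\,\ud\mu$ (with $\varphi$ extended by zero) and the application of the expansion require $r<\dist(\spt\varphi,\partial\Omega)$ --- exactly the restriction you impose.
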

In the claim above $HW^{1,2}_{loc}(\Omega)$ denotes the horizontal Sobolev space and $\Delta_\mathbb G$ is the subelliptic laplacian \eqref{eq:subelliptic_laplacian}. Theorem \ref{cor:carnot} characterizes amv-harmonicity (in both senses) for a large class of pseudonorms, namely those satisfying \eqref{eq:pseudof}, and in particular for the Koranyi gauge \eqref{G-norm}. See \cite{fp} for similar results for $p$-harmonic functions on Carnot groups with the Koranyi gauge. We remark that solutions to the sub-Laplacian equation are known to satisfy certain \emph{weighted} mean value properties, see~\cite[Chapter 5.5]{blu} and~\cite{adw}.

We close this subsection by restricting ourselves to Carnot groups of step $s=2$ where we are able to obtain a Blaschke--Privaloff--Zaremba (BPZ) type theorem. Such results update regularity from continuous to smooth under the assumption that the $r$-laplacian vanishes \emph{pointwise} as $r\to 0$. In the theorem below we consider the Koranyi gauge \eqref{G-norm} on a Carnot group.

\begin{theorem}\label{BPZ-thm-Intro}
	Let $\Om\subset \mathbb G$ be a domain in the Carnot group $\mathbb G$ of step $2$ equipped with the gauge distance $d_{\mathbb G}$ and let $f:\Om\to \R$ be a continuous function satisfying
	\[
	\lim_{r\to 0^{+}}\Delta_rf(x)=0,\quad x\in\Om.
	\]
	Then $f$ is a sub-elliptic harmonic function, i.e. for all $x\in \Om$ it holds that 
	\[
	\lim_{r\to 0^{+}}\Delta_r f(x)=C\Delta_{\mathbb G} f(x)=0,
	\]
	where $C$ depends on $v_1$ (the dimension of the horizontal layer $V_1$) and the geometry of $\mathbb G$.
\end{theorem}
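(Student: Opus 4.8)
The plan is to prove a Blaschke--Privaloff--Zaremba type result via a second-order Taylor expansion of the $r$-laplacian, combined with the standard upgrade of regularity for continuous sub-elliptic subsolutions and supersolutions.

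\textbf{Step 1: Taylor expansion of $\Delta_r f$ for smooth $f$.} First I would compute, for a function $f\in C^2(\Om)$ (or $C^\infty$), the exact asymptotics of $\Delta_r f(x)$ as $r\to 0$. Using left translations and the homogeneous dilations $\delta_r$ of the Carnot group together with the group-invariance of the gauge balls, one writes
\[
f_{B_r(x)}-f(x)=\frac{1}{\Ha^Q(B_r(x))}\int_{B_r(x)}\bigl(f(y)-f(x)\bigr)\ud\Ha^Q(y),
\]
and expands $f(y)-f(x)=f(x\cdot\delta_r(w))-f(x)$ in exponential coordinates for $y=x\cdot\delta_r(w)$, $w\in B_1$. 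In step~$2$ groups the homogeneous coordinates split into a horizontal layer $V_1$ of dimension $v_1$ (which scales like $r$) and a vertical layer $V_2$ (which scales like $r^2$). The linear horizontal terms integrate to zero over the symmetric unit gauge ball $B_1$ (since the Koranyi gauge is symmetric under $w^{(1)}\mapsto -w^{(1)}$), and the leading surviving contribution at order $r^2$ comes from the horizontal second derivatives $X_iX_j f$ together with the first-order vertical derivatives, which in step~$2$ combine precisely into the sub-Laplacian $\Delta_{\mathbb G}f=\sum_{i=1}^{v_1}X_i^2 f$. This yields the identity
\[
\Delta_r f(x)=C\,\Delta_{\mathbb G}f(x)+o(1),\qquad r\to 0,
\]
with $C=\frac{1}{2v_1}\,\fint_{B_1}\|w^{(1)}\|^2\ud\Ha^Q(w)>0$, using the rotational symmetry in the $V_1$-variable that makes the mixed and off-diagonal second-order integrals vanish.

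\textbf{Step 2: Reduction to subsolutions and supersolutions.} The hypothesis gives only pointwise vanishing of $\Delta_r f$ for a merely continuous $f$, so the expansion of Step~1 cannot be applied directly. The standard BPZ strategy is to show that the pointwise condition $\lim_{r\to 0}\Delta_r f(x)=0$ forces $f$ to be simultaneously sub-elliptic subharmonic and superharmonic in the viscosity (or distributional/potential-theoretic) sense. Concretely, I would argue that if $\varphi\in C^\infty$ touches $f$ from above at a point $x_0$ (i.e. $\varphi\geq f$ near $x_0$ with equality at $x_0$), then $\Delta_r f(x_0)\geq \Delta_r\varphi(x_0)$ for the mean-value $r$-laplacian (because $f_{B_r(x_0)}\geq \varphi_{B_r(x_0)}$ while $f(x_0)=\varphi(x_0)$); letting $r\to 0$ and invoking Step~1 for the smooth $\varphi$ gives $0=\lim_r\Delta_r f(x_0)\geq C\Delta_{\mathbb G}\varphi(x_0)$, hence $\Delta_{\mathbb G}\varphi(x_0)\leq 0$. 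This shows $f$ is a viscosity supersolution of $\Delta_{\mathbb G}f=0$; the symmetric argument with test functions touching from below shows it is a viscosity subsolution.

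\textbf{Step 3: Regularity upgrade.} Being simultaneously a viscosity sub- and supersolution, $f$ is a viscosity solution of the sub-Laplace equation $\Delta_{\mathbb G}f=0$. I would then invoke the (by now standard) equivalence between viscosity and weak solutions for the sub-Laplacian on Carnot groups, together with sub-elliptic regularity theory (H\"ormander's theorem, since the horizontal fields satisfy the H\"ormander condition), to conclude that $f$ is smooth and classically sub-elliptic harmonic, $\Delta_{\mathbb G}f\equiv 0$ in $\Om$. Once smoothness is known, Step~1 applies to $f$ itself and yields the quantitative conclusion $\lim_{r\to 0}\Delta_r f(x)=C\,\Delta_{\mathbb G}f(x)=0$ pointwise.

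The main obstacle is Step~2--Step~3, namely passing from the purely pointwise asymptotic hypothesis on the continuous function $f$ to genuine (weak or viscosity) sub-harmonicity, and then to smoothness; the Taylor expansion of Step~1 is essentially a bookkeeping computation once the symmetry of the gauge ball is exploited, but it is only valid a~priori for $C^2$ functions. The delicate point is justifying the comparison inequality $\Delta_r f(x_0)\geq \Delta_r\varphi(x_0)$ and ensuring that the $o(1)$-remainder in the expansion of the smooth test function is \emph{locally uniform}, so that the limit can be taken legitimately at the touching point. Restricting to step~$2$ is what makes the vertical contribution enter only at the correct order $r^2$ and keeps the constant $C$ explicit and positive, which is essential for the sign of the comparison to be preserved.
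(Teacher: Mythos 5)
Your proposal is correct in outline, but it takes a genuinely different route from the paper. The paper follows Privaloff's original barrier argument directly: it solves the sub-elliptic Dirichlet problem on a gauge ball $B(p_0,R)$ with boundary data $f$ (this is where step $2$ enters---gauge balls are $X$-NTA domains in step-$2$ groups by Capogna--Garofalo, hence regular for the Dirichlet problem, while this is open in higher step), sets $\phi=f-u$, and derives a contradiction at an interior minimum $q_1$ of the perturbed function $F(p)=\phi(p)+\frac{\phi(q)}{2}\bigl(\|(p^{-1}p_0)^{(1)}\|^2-R^2\bigr)/R^2$, using $\Delta_r F(q_1)\ge 0$ together with the explicit computation $\Delta_{\mathbb G}\|(p^{-1}p_0)^{(1)}\|^2=2v_1$ and the expansion $\Delta_r\varphi\to C\Delta_{\mathbb G}\varphi$ for smooth $\varphi$ (\cite[Lemma 4.1]{fp}, your Step 1). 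Your viscosity reformulation replaces the explicit barrier by arbitrary smooth test functions touching $f$, which is cleaner, but it outsources the hard step to the equivalence of viscosity and distributional solutions. For the sub-Laplacian, written in exponential coordinates as $\mathrm{tr}(A(x)D^2u)$ with $A=\sigma\sigma^T$ and $\sigma$ polynomial, this equivalence does hold (Ishii's theorem on linear degenerate elliptic equations), and then H\"ormander hypoellipticity finishes; granted that citation, your argument is complete and in fact would not need step $2$ at all, since the Taylor expansion of Step 1 is valid in any step (as the paper itself remarks after the theorem statement). If instead you justify Step 3 by comparing $f$ with the Perron/classical solution on gauge balls, you re-import exactly the paper's step-$2$ solvability ingredient and your proof essentially collapses back to Privaloff's.

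Two slips to fix. First, your comparison inequality in Step 2 is reversed: if $\varphi\ge f$ near $x_0$ with $\varphi(x_0)=f(x_0)$, then averaging gives $f_{B_r(x_0)}\le \varphi_{B_r(x_0)}$, hence $\Delta_r f(x_0)\le \Delta_r\varphi(x_0)$, and the limit yields $\Delta_{\mathbb G}\varphi(x_0)\ge 0$, i.e.\ touching from above detects the \emph{subsolution} property, not the supersolution property; the two labels swap but the assembled conclusion (viscosity solution) survives. Second, you misattribute the role of step $2$: it is not needed to make ``the vertical contribution enter only at order $r^2$''---first-order terms from every layer $V_j$ cancel by the symmetry $w^{(j)}\mapsto -w^{(j)}$ of the gauge ball, and layers $j\ge 3$ contribute $O(r^3)=o(r^2)$, so the expansion with $C=\frac{1}{2v_1}\vint_{B_1}\|w^{(1)}\|^2\ud\Ha^Q(w)$ holds for $C^2$ functions in arbitrary step. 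In the paper the restriction to step $2$ arises solely from the solvability of the Dirichlet problem with continuous data on gauge balls.
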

Similar results are well-known on Riemannian manifolds (cf. Proposition~\ref{BPZ-thm2} for a sketch of proof for the readers convenience). We also obtain BPZ-type results for Alexandrov surfaces (cf. Proposition~\ref{prop:Alex-surf}), and for  Alexandrov spaces (cf.  Proposition~\ref{BPZ-thm3}). In summary, we show that continuity and the strong amv-harmonicity are sufficient conditions to imply the harmonicity with respect to various natural Laplace operators in some singular spaces.



\subsection{Overview and strategy}
One of the drawbacks of the $r$-laplacian $\Delta_r$ is its lack of self-ad\-joint\-ness: without further assumptions on the geometry of the underlying space, the difference $\Delta_r-\Delta^*_r$ of the $r$-laplacian and its formal adjoint (cf. Section \ref{sec-dbl}) does not tend to zero in any reasonable sense. In the RCD-setting, where the translation invariance of the measure is not available, a large part of our work is to deal with this issue. We overcome it by introducing the \emph{symmetrized $r$-laplacian} of a locally integrable function $u$ defined in a metric measure space  $X=(X,d,\mu)$  (see Definition~\ref{eq:symkernel} for details):
\begin{align}\label{intro-sym-lapl}
\widetilde\Delta_ru(x):=\frac 12\int_{B_r(x)}\left[\frac{1}{\mu(B_r(x))}+\frac{1}{\mu(B_r(y))}\right]\frac{u(y)-u(x)}{r^2}\ud\mu(y), \quad x\in X.
\end{align}
In Section~\ref{sec:modified} we study the basic properties of the symmetrized $r$-laplacian, in particular self-ad\-joint\-ness, which makes it a viable candidate for a yet another laplacian studied beyond the Euclidean setting. Crucially, the symmetrized $r$-laplacian is connected to the (approximate) Korevaar--Schoen (KS) energy so that, if the KS-energy is a Dirichlet form in the sense of Definition \ref{def:ksexist}, then the symmetrized $r$-laplacians $\widetilde\Delta_ru$ converge in a suitable weak sense, as $r\to 0$, to the Korevaar--Schoen laplacian $\Delta_{KS}u$ of $u$ for any $u\in D(\Delta_{KS})$, see Section \ref{sec:ks} and Proposition \ref{prop:modified}. It follows that local minimizers of the Korevaar--Schoen energy are characterized as functions whose symmetrized $r$-laplacian vanishes in the weak sense as $r\to 0$. 

Both RCD-spaces and Carnot groups have the property that the Korevaar--Schoen energy is a Dirichlet form. For RCD-spaces, this follows from \cite{gt20} (in the non-collapsed case also from work of Sturm \cite{st1,sturm2}), while for Carnot groups it is implied by \cite{tan}. See Propositions \ref{prop:gigtyu} and \ref{prop:carnot} for precise statements. For Carnot groups (where $\widetilde \Delta_r=\Delta_r$) this observation leads to the equivalence of weak amv-harmonicity and subelliptic harmonicity. 
We remark that the novel implication in Theorem \ref{cor:carnot} is that weakly harmonic functions are subelliptic harmonic; indeed, the strong amv property of subelliptic harmonic functions follows with a straightforward calculation as in \cite[Lemma 4.1]{fp}.

In the RCD-setting the equality $\widetilde \Delta_r=\Delta_r$ breaks down and, in the presence of boundary, the two $r$-laplacians do not agree even asymptotically (which can be seen by considering e.g. the standard half plane). To circumvent this problem we use the notion of \emph{vanishing metric measure boundary} introduced in \cite{klp17}, which has recently been shown in \cite{BMS22} to be equivalent to having no boundary in the sense of \cite{gig-dephil18}. In non-collapsed RCD-spaces the vanishing mm-boundary enables us to obtain the asymptotic vanishing of $\widetilde \Delta_r-\Delta_r$ as $r\to 0$ in a suitable weak sense, cf. Lemma \ref{lem:nodev} and Corollary \ref{cor:amv-sa}. This yields the proof of Theorem \ref{thm:ncRCD}.

Theorem \ref{BPZ-thm-Intro}, and the BPZ-type theorems in Section \ref{sec:bpz}, are obtained by modifying the original argument in \cite{pri} to fit the various nonsmooth settings. The crucial tool is the solvability of the Dirichlet problem with continuous data on the boundary of the metric ball for various notions of Laplacians on singular spaces, and lower estimates for the Laplacian of distance functions. An inspection of the proof of Theorem \ref{BPZ-thm-Intro} shows that the restriction to step 2 arises from regularity issues of the boundary of the unit ball $B_\mathbb G$ with respect to the Koranyi gauge: in step $s>2$ the solvability of the Dirichlet problem with continuous boundary data in $B_\mathbb G$ is an open problem.


\smallskip

{\bf Acknowledgements} The authors would like to thank Ben Warhurst for a discussion concerning Theorem~\ref{cor:carnot}.

\section{Preliminaries}

\subsection{Notation and conventions}\label{subs-notation}
Throughout this article, a metric measure space $X=(X,d,\mu)$ is a separable metric space $(X,d)$ equipped with a Radon measure $\mu$ that is finite and non-trivial on balls, i.e. $0<\mu(B)<\infty$ for all balls $B\subset X$.

Given a subset $F\subset X$ of a metric space and $r>0$, we denote
\[
N_r(F)=\{ x\in X:\ \dist(x,F)<r\} \textrm{ and }\overline N_r(F)=\{x\in X:\ \dist(x,F)\le r\}
\]
the open and closed $r$-neighbourhood of $F$ (note that $\overline N_r(F)$ need not be the closure of $N_r(F)$ unless $X$ is a length space). For $x\in X$, we denote by $B_r(x):=N_r(\{x\})$ and $\overline B_r(x):=\overline N_r(\{x\})$,  respectively, an open and closed ball centered at $x$ with radius $r$.

The Lipschitz constant of a map $f:(X, d_X) \to (Y, d_Y)$ between metric spaces is
\begin{equation*}
\LIP(f):=\sup_{x\ne y}\frac{d_Y(f(x),f(x))}{d_X(x,y)},
\end{equation*}
and the pointwise Lipschitz slope is
\begin{align*}
\Lip f(x):= \limsup_{r\to 0}\sup_{0<d(y,x)<r}\frac{d_Y(f(x),f(y))}{r},\quad x\in X.
\end{align*}
If $x\in X$ is an isolated point, then $\Lip f(x)=0$.

Given $u\in L_{loc}^1(X)$ and a $\mu$-measurable set $A\subset X$ with $\mu(A)>0$, 
\[
u_{A}=\vint_A u\ud\mu=\frac{1}{\mu(A)}\int_Au\ud\mu
\]
denotes the  average of $u$ over $A$.

\subsection{Doubling measures and averaging operators}\label{sec-dbl}

A measure $\mu$ on a separable metric space $X$ is called \emph{locally doubling} if, for every compact $K\subset X$, there exists $r_K>0$ and a constant $C_K>0$, such that 
\begin{equation}\label{eq:doubl}
\mu(B_{2r}(x))\le C_K\mu(B_r(x))
\end{equation}
for every $x\in K$ and $0<r\le r_K$.

Let $(X,d,\mu)$ be a metric measure space and $r>0$. Given a locally integrable function $u\in L^1_{loc}(X)$, we denote by
\[
A_r^\mu u(x)=A_r u(x)=\vint_{B_r(x)}u\ud\mu,\quad x\in X,
\]
the $r$-average function of $u$. Note that $A_ru(x)=u_{B_r(x)}$.
 
We will use the two notations interchangeably, depending on whether we want to view the average as a number, or an operator on a function space. Indeed, the function $A_ru:X\to\R$ is measurable, and $A_r$ defines a bounded linear operator $A_r:L^1(X)\to L^1(X)$ if and only if $a_r\in L^\infty(X)$, where
\[
a_r(x)=\int_{B_r(x)}\frac{\ud\mu(y)}{\mu(B_r(y))},\quad x\in X.
\]
Moreover, in this case the operator norm satisfies $\|A_r\|_{L^1\to L^1}=\|a_r\|_{L^\infty}$, see~\cite[Theorem 3.3]{Aldaz18}. This is true in particular when $\mu$ is a doubling measure.  On the other hand, it is true that by the Lebesgue differentiation theorem
\begin{align*}
u(x)=\lim_{r\to 0}A_ru(x)\,\textrm{ for almost every }x\in X,
\end{align*}
if $\mu$ is infinitesimally doubling, cf.~\cite[Remark 3.4.29]{HKST}.

If $X$ is doubling as a metric space, then there exists $C>0$ so that $\|A_r\|_{L^p\to L^p}\le C$ for every $r>0$ and every $1\le p<\infty$, cf.~\cite[Theorem 3.5]{Aldaz18}. However, $A_r$ is not a self-adjoint operator; the formal adjoint $A_r^*$ of $A_r$  is given by
\begin{equation*}
(A_r^\mu)^*u(x)=A_r^*u(x):=\int_{B_r(x)}\frac{u(y)\ud\mu(y)}{\mu(B_r(y))},\quad x\in X,
\end{equation*}
for $u\in L^1_{loc}(X)$. Indeed, a direct computation using the Fubini theorem yields that
\begin{equation}\label{eq:alaGreen}
\int_X vA_ru\ud\mu=\int_XuA_r^*v\ud\mu,\quad u\in L^p(X),\ v\in L^q(X),
\end{equation}
where $1/p+1/q=1$.

We may express the $r$-laplacian using the averaging operator as
\begin{align*}
\Delta_ru=\frac{A_ru-u}{r^2},\quad u\in L^1_{loc}(X).
\end{align*}
Next, we denote by
\begin{equation}\label{eq:adjointlaplace}
\Delta_r^*u:=\frac{A_r^*u-u}{r^2},\quad  u\in L^1_{loc}(X),
\end{equation}
the formal adjoint of the $r$-laplacian. Note that if $A_r:L^p(X)\to L^p(X)$ is bounded, then $\Delta_r:L^p(X)\to L^p(X)$ and $\Delta_r^*:L^q(X)\to L^q(X)$ are both bounded, where $1/p+1/q=1$.

\begin{remark}\label{rmk: Green}
 Identity \eqref{eq:alaGreen} together with~\eqref{eq:adjointlaplace} imply the following Green-type formula for  $u\in L^p(X),\ v\in L^q(X)$ where $1/p+1/q=1$:
 \[
  \int_X v \Delta_ru\ud\mu=\int_X u \Delta_r^*v\ud\mu.
 \]
The asymptotic behaviour of $\Delta_r-\Delta_r^*$ as $r\to 0$ plays an important role in Section \ref{sec:weak-amv}. 
\end{remark}

\begin{remark}\label{rmk:domain}
While most of our results will be formulated for metric measure spaces, they encompass the case of an open set $\Omega \subset X$ in the introduction. Indeed, an open subset $\Omega\subset X$ of a metric measure space can be regarded as a metric measure space $\Omega=(\Omega,d|_\Omega,\mu|_\Omega)$. In particular, \emph{if $X$ is locally doubling, then $\Omega$ is locally doubling.}
\end{remark}

\subsection{Cheeger and Korevaar--Schoen energy, and Dirichlet forms}\label{sec:ks}

In this section we recall some basic notions of calculus in metric measure spaces, needed in the discussion of the harmonic functions according to Cheeger, see Theorems~\ref{thm:ncRCD} and~\ref{cor:carnot}. Moreover, we discuss the Korevar--Schoen energy and its relation to the Dirichlet forms.
 
 One of the founding stones of the calculus theory in metric measure spaces are the notions of \emph{$p$-weak upper gradient} and the  \emph{minimal $p$-weak upper gradient}, see \cite[Chapter 5]{HKST}. Related is a counterpart of the Sobolev space on a domain $\Om\subset X$, the so-called \emph{Newtonian space} $N^{1,p}(X)$ and  the local Newtonian space $N_{loc}^{1,p}(\Om)$,  see~\cite[Chapter 7]{HKST} and \cite{bb} for precise definitions of these spaces and their further properties.

If $X=\R^n$ with the Lebesgue measure and the Euclidean distance, then $N^{1,p}(\Om)$ (resp. $N_{loc}^{1,p}(\Om)$) coincides with the Sobolev space $W^{1,p}(\Om)$ (resp. $W_{loc}^{1,p}(\Om)$) and $g_u=|\nabla u|$.

\subsubsection*{Cheeger energy}\label{sec:Cheeger}

Let $X=(X,d,\mu)$ be a metric measure space and $1\le p <\infty$. The Cheeger \cite{cheeg} energy $Ch_p:L^p(X)\to [0,\infty]$ is defined as the $L^p$-relaxation of the slope.
Shanmugalingam \cite{sh} and Ambrosio--Colombo---Di Marino \cite{acd} showed that for $N^{1,p}$- functions the Cheeger energy coincides with the $p$-integral of the minimal $p$-weak upper gradient $g_u$, i.e. 
\begin{equation}
Ch_p(u)=\frac 1p \int_X g_u^p\ud\mu.
\end{equation}
 Cheeger's definition guarantees that $Ch_p$ is lower semicontinuous with respect to $L^p$-convergence.

\begin{definition}\label{def:ch-harm}
Let $\Omega\subset X$ be a domain in a metric measure space. A continuous function $u\in N^{1,2}_{loc}(\Omega)$ is called harmonic if for any $\varphi\in \LIP_c(\Omega)$ there holds
\begin{align}\label{eq:locmin}
Ch_2(u)\le Ch_2(u+\varphi)
\end{align}
\end{definition}
We assume continuity of Cheeger harmonic functions in the definition. However, in path-connected doubling metric measure spaces supporting Poincar\'e inequality, local H\"older (Lipschitz under stronger assumptions) continuity of Cheeger harmonic functions is established, see~\cite[Theorem 1.1]{ksr}.

\subsubsection*{Korevaar--Schoen energy}
In \cite{Kor-Sch93}, the authors defined a Sobolev space on a metric measure space $X$ by considering what is now called as the \emph{Korevaar--Schoen energy} for $u\in L^2_{loc}(X)$:
\begin{align}\label{eq:ksenergy}
E^2_{KS}(u):=\sup_{\varphi\in C_c(X),0<\varphi\le 1}\limsup_{r\to 0}\int_X\varphi(x)e^2_r(u)\ud\mu,
\end{align}
where the approximate energy densities $e_r^2(u)\ud\mu$ are given by
\begin{align*}
e^2_r(u)(x)=\frac 12\vint_{B_r(x)}\left|\frac{u(y)-u(x)}{r}\right|^2\ud\mu(y), \quad x\in X.
\end{align*}

For each $r>0$, the approximate energy densities arise from a symmetric bi-linear form $e_r:L^2_{loc}(X)\times L^2_{loc}(X)\to L^1_{loc}(X)$ given by
\begin{align*}
e_r(u,v)(x)=\frac 12 \vint_{B_r(x)}\frac{u(y)-u(x)}{r}\frac{v(y)-v(x)}{r}\ud\mu(y),\quad x\in X,
\end{align*}
which satisfies $e_r(u,u)=e^2_r(u).$ Furthermore, we define
\[
 E_r(u,v):=\int_{X} e_r(u,v)\ud\mu.
\]
The Sobolev space $KS^{1,2}(X)$ consisting of functions $u\in L^2(X)$ for which $E_{KS}^2(u)<\infty$, equipped with the norm
\[
\|u\|_{KS^{1,2}}=(\|u\|_{L^2}^2+E^2_{KS}(u))^{1/2}
\]
is known as the \emph{Korevaar--Schoen space}. If $X$ is doubling metric measure space a continuous embedding $M^{1,2}(X)\hookrightarrow KS^{1,2}(X)\hookrightarrow N^{1,2}(X)$ holds true. If $X$ is additionally complete and supports a 2-Poincar\'e inequality the three spaces are equal with equivalent norms, cf. \cite[Theorem 10.5.3]{HKST}.

\subsubsection*{Dirichlet forms}

We follow the notation of \cite[Part I]{dir} and say that a Dirichlet form on $L^2(X)$ is a lower semicontinuous quadratic functional $E:\mathcal F\to [0,\infty]$ defined on a dense domain $\mathcal F\subset L^2(X)$, which is a Hilbert space equipped with the norm
\begin{align*}
\|f\|_{\mathcal F}:=(\|f\|_{L^2}^2+E(f))^{1/2},\quad f\in\mathcal F,
\end{align*}
satisfying
\begin{align*}
E(\min\{f,1\})\le E(f),\quad f\in\mathcal F.
\end{align*}
We denote by $\mathcal E:\mathcal{F}\times \mathcal{F}\to \R$ the bi-linear form associated to $E$ and assume the following strong locality property: \emph{if $f,g\in \mathcal F$  and $g$ is constant on a neighbourhood of $\spt f$, then $\mathcal E(f,g)=0.$}

The form $\mathcal E$ admits a \emph{generator}: a self-adjoint operator $\Delta_{\mathcal E}:D(\Delta_{\mathcal E})\to L^2(X)$ with domain $D(\Delta_{\mathcal E})\subset \mathcal F$ satisfying the following:
\begin{align*}
\int f\Delta_{\mathcal E}g\ud\mu=-\mathcal E(f,g),\quad f\in \mathcal F,\ g\in D(\Delta_{\mathcal E}).
\end{align*}
We call $\Delta_{\mathcal E}$ the laplacian associated to the Dirichlet form. Moreover, if $\mathcal E$ is \emph{regular}, it admits \emph{carr\'e du champ}, i.e. a positive, symmetric and continuous bi-linear operator $\Gamma:\mathcal F\times \mathcal F\to L^1(X)$ with the property that
\begin{align*}
\mathcal E(f,g)&=\int_X\Gamma(f,g)\ud\mu,\quad f,g\in\mathcal F, \textrm{ and }\\
\mathcal E(hf,g)+\mathcal E(f,hg)&=\mathcal E(h,fg)+2\int_Xh\Gamma(f,g)\ud\mu,\quad  f,g,h\in \mathcal F\cap L^\infty(X).
\end{align*}

We may localize a given Dirichlet form $E$ on $L^2(X)$ to a domain $\Omega\subset X$. We say that $f\in \mathcal F_{loc}(\Omega)$ if there is a sequence $(f_k)\subset \mathcal F$ and open sets $\Omega_k\subset \Omega$ such that $\Omega_k\subset \spt(f_k)$, $\Omega=\bigcup_k\Omega_k$, and $f_k=f$ on $\Omega_k$. By the strong locality, we may define the Carr\'e du Champ
\begin{align*}
	\Gamma:\mathcal F_{loc}(\Omega)\times \mathcal F_{loc}(\Omega)\to L^1_{loc}(\Omega),\quad \Gamma(f,g)=\Gamma(f_k,g_k)\textrm{ on }\Omega_k,
\end{align*}
where $\bigcup_k\Omega_k=\Omega$ and $f=f_k$ and $g=g_k$ on $\Omega_k$. We can correspondingly localize $\Delta_{\mathcal E}$, and we denote by $D(\Delta_{\mathcal E};\Omega)$ the domain of $\Delta_{\mathcal E}$ localized to $\Omega$, cf. \cite[Section 7]{dir}. 

The Cheeger and Korevaar--Schoen energies do not define Dirichlet forms in the sense defined above in general. Indeed, while the Cheeger energy is lower semicontinuous, it need not be a quadratic form. The Korevaar--Schoen energy is neither quadratic nor lower semicontinuous in general.

However, in RCD spaces and Carnot groups the Cheeger energy is quadratic and therefore constitutes a Dirichlet form. In particular in these classes the Korevaar--Schoen energy is known to be a constant multiple of the Cheeger energy and thus a posteriori defines a Dirichlet form, see \cite{gt20}. We refer to Section \ref{sec:weak-amv} for a more precise discussion.


\subsubsection*{Spaces with a Korevaar--Schoen Dirichlet form}

The Korevaar--Schoen energy has a distinguished role in this paper due to its close connection with the symmetrized $r$-laplacian, see Section \ref{sec:modified}. For the sake of a unified treatment of weakly amv-harmonic functions in Section \ref{sec:weak-amv}, we introduce the following terminology for spaces where the Korevaar--Schoen energy defines a Dirichlet form.

\begin{definition}\label{def:ksexist}
	Let $X=(X,d,\mu)$ be a locally compact metric measure space. We say that $X$ has a Korevaar--Schoen Dirichlet form (abbreviated KS-Dirichlet form), if $E_{KS}$ is lower semicontinuous with respect to $L^2$-convergence and, for all $u\in KS^{1,2}(X)$, there exists an integrable function $e^2(u)$ satisfying
	\begin{align*}
	\lim_{r\to 0}\int_X\varphi e^2_r(u)\ud\mu=\int_X\varphi e^2(u)\ud\mu
	\end{align*}
	for all $\varphi\in C_c(X)$. The function $e^2(u)$ is called the energy density of $u$.
\end{definition}

By the previous discussion, elaborated in Section \ref{sec:weak-amv}, RCD spaces and Carnot groups equipped with certain homogeneous norms have a KS-Dirichlet form. Sturm \cite{st1,sturm2} introduced the class of $MCP(K,N)$-spaces, which includes RCD-spaces and showed that an energy closely related to $E_{KS}^2$ is a Dirichlet form on $MCP(K,N)$. Note that here we do not consider the existence and lower semicontinuity of the limit in the Korevaar--Schoen energy for maps with metric space targets. This fact is known for RCD and MCP-spaces while for Carnot groups it is -- to our knowledge -- an open question.

If $X$ has a KS-Dirichlet form, then the domain of $E_{KS}^2$ is $KS^{1,2}(X)$ and carr\'e du champ is given by
\begin{align*}
e(u,v):=\frac 14 [e^2(u+v)-e^2(u-v)]=\lim_{r\to 0}e_r(u,v),
\end{align*}
where the convergence is in the weak sense of measures. We denote by $\mathcal E_{KS}$ and $\Delta_{KS}$ the bi-linear form, and its generator, associated to $E_{KS}^2$.

\section{Symmetrized $r$-laplacian and amv-harmonic functions}\label{sec:weak-amv}
In this section we prove Theorems \ref{thm:ncRCD} and \ref{cor:carnot}. We begin by relating the Korevaar--Schoen energy to the symmetrized $r$-laplacians, thus establishing the general principle that local minimizers correspond to functions with vanishing symmetrized $r$-laplacian, cf. Proposition \ref{prop:modified}. 

In the case of Carnot groups the symmetrized $r$-laplacian agrees with the $r$-laplacian. Thus Theorem \ref{cor:carnot} follows directly from Proposition \ref{prop:modified} and a second order Taylor expansion, once we establish that Carnot groups have a Korevaar--Schoen Dirichlet form, see Proposition \ref{prop:carnot}.

Finally we consider non-collapsed RCD-spaces. After reviewing their relevant basic properties we show that having vanishing mm-boundary implies that $\Delta_r$ and $\widetilde \Delta_r$ agree asymptotically (Lemma~\ref{lem:nodev}). Since by \cite{gt20} the Korevaar--Schoen energy is a Dirichlet form, Theorem \ref{thm:ncRCD} follows.


\subsection{Symmetrized $r$-laplacian and the Korevaar--Schoen energy}\label{sec:modified}

Throughout this subsection, $X=(X,d,\mu)$ denotes a generic metric measure space. To define the symmetrized $r$-laplacian, we denote by $k_r:X\times X\to \R$ the \emph{symmetric mean value kernel}: 
\begin{equation}\label{eq:symkernel}
k_r(x,y)=\frac 12\chi_{[0,r]}(d(x,y))\left(\frac{1}{\mu(B_r(x))}+\frac{1}{\mu(B_r(y))}\right),\quad x,y\in X, r>0.
\end{equation}
\begin{definition}\label{def:modlap}
Let $X=(X,d,\mu)$ be a metric measure space, and $r>0$. 
The \emph{symmetrized $r$-laplacian} of a function $u\in L^1_{loc}(X)$ is given by
\begin{align*}
\widetilde\Delta_ru(x)=\int_Xk_r(x,y)\frac{u(y)-u(x)}{r^2}\ud \mu(y),\quad x\in X.
\end{align*}
\end{definition}

Let $u\in L_{loc}^1(X)$ and $r>0$. A direct calculation yields the following identity (cf. formula~\eqref{eq:adjointlaplace}).
\begin{equation}\label{eq:modrel}
\widetilde \Delta_ru=\frac 12\left(\Delta_ru+\Delta_r^*u-u\Delta_r^*1\right),\quad x\in X.
\end{equation}
Moreover, we have the following elementary identities (recall the definitions of $e_r$ and $E_r$ in Section~\ref{sec:ks}).
\begin{lemma}\label{lem:selfadjbasic}
	Let $u,v\in L^2_{loc}(X)$ and $r>0$. Then 
	\begin{align*}
	\Delta_r(uv)=u\Delta_rv+2e_r(u,v)+v\Delta_ru
	\end{align*}
	$\mu$-almost everywhere. In particular, it holds that
	\begin{align*}
	&(a) \qquad \qquad \qquad \qquad \int_Xv\widetilde\Delta_ru\ud\mu=-E_r(u,v) \\
	& and\\
	&(b) \qquad\qquad \qquad \qquad \int_Xv(\Delta_ru-\widetilde\Delta_ru)\ud\mu=\frac 12\int_X v(x)\vint_{B_r(x)}\frac{\delta_r(x,y)}{r}\frac{u(y)-u(x)}{r}\ud\mu(y)\ \ud\mu(x)
	\end{align*}
and
	whenever $u,v\in L^2(X)$. Here
\[
\delta_r(x,y)=1-\frac{\mu(B_r(x))}{\mu(B_r(y))}.
\]
\end{lemma}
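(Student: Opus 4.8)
The plan is to derive the product rule from a pointwise algebraic identity and then obtain (a) and (b) by integration, the only genuine subtlety being the justification of Fubini's theorem in (a).

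\textbf{Product rule.} First I would expand the numerator of $\Delta_r(uv)$ using the elementary identity
\[
u(y)v(y)-u(x)v(x)=u(x)(v(y)-v(x))+v(x)(u(y)-u(x))+(u(y)-u(x))(v(y)-v(x)).
\]
Dividing by $r^2$ and averaging over $B_r(x)$, the first two summands yield $u(x)\Delta_r v(x)$ and $v(x)\Delta_r u(x)$ by linearity of the average (the factors $u(x),v(x)$ being constant in the integration variable $y$), while the third summand is precisely $2e_r(u,v)(x)$ by definition of the bilinear form $e_r$. This establishes $\Delta_r(uv)=u\Delta_r v+2e_r(u,v)+v\Delta_r u$ $\mu$-almost everywhere.

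\textbf{Identity (a).} Writing $\int_X v\widetilde\Delta_r u\,\ud\mu$ as a double integral against the symmetric kernel $k_r$, I would use $k_r(x,y)=k_r(y,x)$ together with Fubini and a relabeling $x\leftrightarrow y$ to symmetrize. Averaging the original expression with its relabeled copy turns the factor $v(x)(u(y)-u(x))$ into $-\tfrac12(u(y)-u(x))(v(y)-v(x))$, giving
\[
\int_X v\widetilde\Delta_r u\,\ud\mu=-\frac12\int_X\int_X k_r(x,y)\frac{(u(y)-u(x))(v(y)-v(x))}{r^2}\,\ud\mu(y)\,\ud\mu(x).
\]
To identify the right-hand side with $-E_r(u,v)$, I note that $E_r(u,v)=\int_X e_r(u,v)\,\ud\mu$ has the same form but with the asymmetric kernel $\chi_{[0,r]}(d(x,y))/\mu(B_r(x))$; since the remaining factor $(u(y)-u(x))(v(y)-v(x))$ is symmetric in $x,y$, this kernel may be replaced by its symmetrization $k_r$ without altering the integral, which completes the identification.

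\textbf{Identity (b).} This is a direct computation from the definitions. Subtracting the integral defining $\widetilde\Delta_r u(x)$ from that defining $\Delta_r u(x)$ and combining the kernels, I would use
\[
\frac{1}{\mu(B_r(x))}-\frac12\left(\frac{1}{\mu(B_r(x))}+\frac{1}{\mu(B_r(y))}\right)=\frac{1}{2\mu(B_r(x))}\left(1-\frac{\mu(B_r(x))}{\mu(B_r(y))}\right)=\frac{\delta_r(x,y)}{2\mu(B_r(x))},
\]
obtaining $\Delta_r u(x)-\widetilde\Delta_r u(x)=\tfrac12\vint_{B_r(x)}\tfrac{\delta_r(x,y)}{r}\tfrac{u(y)-u(x)}{r}\,\ud\mu(y)$ pointwise. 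Multiplying by $v(x)$ and integrating yields (b).

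\textbf{Main obstacle.} The principal technical point is the absolute convergence of the double integrals and the consequent validity of Fubini in (a). Here I would invoke the hypothesis $u,v\in L^2(X)$: by \eqref{eq:modrel} and the boundedness of $A_r$, hence of $\Delta_r,\Delta_r^*$ on $L^2$ (valid in the locally doubling setting, with $\Delta_r^*1\in L^\infty$), one has $\widetilde\Delta_r u\in L^2(X)$, so that $\int_X v\widetilde\Delta_r u\,\ud\mu$ and all the associated double integrals converge absolutely by Cauchy--Schwarz; the fixed factor $1/r^2$ causes no difficulty.
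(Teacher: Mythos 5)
Your proposal is correct, and for the first identity and for (b) it coincides with the paper's proof: the same three-term expansion of $u(y)v(y)-u(x)v(x)$ averaged over $B_r(x)$, and the same pointwise subtraction of kernels giving $\Delta_ru(x)-\widetilde\Delta_ru(x)=\frac 12\vint_{B_r(x)}\frac{\delta_r(x,y)}{r}\frac{u(y)-u(x)}{r}\,\ud\mu(y)$. For (a), however, you take a genuinely different route. The paper derives (a) \emph{from} the product rule: combining the representation $\widetilde\Delta_ru=\frac 12(\Delta_ru+\Delta_r^*u-u\Delta_r^*1)$ of \eqref{eq:modrel} with the Green-type formula coming from \eqref{eq:alaGreen}, it computes $2\int_Xv\widetilde\Delta_ru\,\ud\mu=\int_X(v\Delta_ru+u\Delta_rv-\Delta_r(uv))\,\ud\mu=-2E_r(u,v)$ --- this is why the statement reads ``in particular''. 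You instead symmetrize the double integral directly, using $k_r(x,y)=k_r(y,x)$, Fubini, and the relabeling $x\leftrightarrow y$, and then note that the asymmetric kernel $\chi_{[0,r]}(d(x,y))/\mu(B_r(x))$ defining $E_r$ can be replaced by $k_r$ against the symmetric factor $(u(y)-u(x))(v(y)-v(x))$. Both computations are sound and of comparable length; the paper's version buys economy by reusing the adjoint-operator formalism already set up in the preliminaries and exhibits (a) as a corollary of the pointwise product rule, whereas yours is self-contained, makes the self-adjointness mechanism behind $\widetilde\Delta_r$ completely transparent, and would work verbatim for any symmetric kernel (at the cost of severing the logical link between (a) and the product rule, so in your write-up the first identity is only used nowhere in (a)). You are also more explicit than the paper about the one genuine analytic point, absolute convergence justifying Fubini, which the paper leaves entirely implicit: your sufficient condition $\Delta_r^*1\in L^\infty$, i.e. $a_r\in L^\infty$, does give boundedness of $A_r$ (hence of $A_r^*$, $\Delta_r$, $\Delta_r^*$) on $L^2$ by interpolation between the $L^1$-bound of \cite{Aldaz18} and the trivial $L^\infty$-bound; just note that local doubling by itself does not yield this globally, so the parenthetical hypothesis $a_r\in L^\infty$ is the operative one, and it holds in all the settings (doubling spaces, RCD-spaces, Carnot groups) where the lemma is subsequently applied.
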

\begin{proof}
	For $\mu$-a.e. $x\in X$, we have 
	\begin{align*}
	&u(x)\Delta_rv(x)+2e_r(u,v)(x)+v(x)\Delta_ru(x)\\
	=&\vint_{B_r(x)}\frac{u(x)(v(y)-v(x))+(u(y)-u(x))(v(y)-v(x))+v(x)(u(y)-u(x))}{r^2}\ud\mu(y)\\
	=& \vint_{B_r(x)}\frac{u(y)v(y)-u(x)v(x)}{r^2}\ud\mu(y)=\Delta_r(uv)(x),
	\end{align*}
	proving the first identity in the claim. Together with Remark~\ref{rmk: Green}, the identity \eqref{eq:modrel} implies (a): indeed, we have that
\begin{align*}
2\int_Xv\widetilde\Delta_ru\ud\mu&=\int_Xv(\Delta_ru+\Delta_r^*u-u\Delta_r^*1)\ud\mu=\int_X(v\Delta_ru+v\Delta_r^*u-uv\Delta_r^*1)\ud\mu\\
&=\int_X(v\Delta_ru+u\Delta_rv-\Delta_r(uv))\ud\mu=-2\int_Xe_r(u,v)\ud\mu.
\end{align*}
The identity (b) follows directly from the observation that
\begin{align*}
\Delta_ru(x)-\widetilde\Delta_ru(x)=\frac 12\int_{B_r(x)}\left[\frac{1}{\mu(B_r(x))}-\frac{1}{\mu(B_r(y))}\right]\frac{u(y)-u(x)}{r^2}\ud\mu(y).
\end{align*}
\end{proof}

The characterization of local minimizers of the Korevaar--Schoen energy in terms of the symmetrized $r$-laplacians -- in spaces with a KS-Dirichlet form -- is a straightforward consequence of the preceding observations.


\begin{prop}\label{prop:modified}
	Suppose $X$ is a metric measure space where the  Korevaar--Schoen energy $E_{KS}^2$ exists in the sense of Definition \ref{def:ksexist}. Then the following are equivalent for a function $u\in KS^{1,2}_{loc}(\Omega)$ in a domain $\Omega\subset X$:
	\begin{itemize}
	\item[(1)] $u$ is a local minimizer of $E^2_{KS}$ in $\Omega$;
	\item[(2)] for every $\varphi\in KS^{1,2}(\Omega)$ with compact support in $\Omega$ we have
	\[
	\lim_{r\to 0}\int_X\varphi\widetilde\Delta_ru\ud\mu=0.
	\]
\end{itemize}	
\end{prop}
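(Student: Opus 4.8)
The plan is to show that each of (1) and (2) is equivalent to the single Euler--Lagrange identity
\[
\mathcal E_{KS}(u,\varphi)=0\quad\text{for every }\varphi\in KS^{1,2}(\Omega)\text{ with compact support in }\Omega,
\]
and to bridge (2) with this identity through the asymptotics $\int_X\varphi\widetilde\Delta_ru\,\ud\mu\to-\mathcal E_{KS}(u,\varphi)$ as $r\to 0$. The link between the $r$-level quantities and the limiting bilinear form is Lemma \ref{lem:selfadjbasic}(a), which gives $\int_X\varphi\widetilde\Delta_ru\,\ud\mu=-E_r(u,\varphi)$, so the entire argument rests on establishing $\lim_{r\to 0}E_r(u,\varphi)=\mathcal E_{KS}(u,\varphi)$.

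First I would reduce to globally energy-finite functions so that Lemma \ref{lem:selfadjbasic}(a) and Definition \ref{def:ksexist} apply literally. Fix $\varphi$ with $\spt\varphi\Subset\Omega$, choose an open $U$ with $\spt\varphi\Subset U\Subset\Omega$ and a cutoff $\eta\in\LIP_c(\Omega)$ equal to $1$ on $U$, and set $\tilde u:=\eta u\in KS^{1,2}(X)$, which agrees with $u$ on $U$. Both $\widetilde\Delta_r(\cdot)$ and the density $e_r(\cdot,\varphi)$ are local at scale $r$ — the latter being supported in $N_r(\spt\varphi)$, since $\varphi(y)-\varphi(x)=0$ whenever $x,y\notin\spt\varphi$ — so for all sufficiently small $r$ one has $\int_X\varphi\widetilde\Delta_ru\,\ud\mu=\int_X\varphi\widetilde\Delta_r\tilde u\,\ud\mu$ and $E_r(u,\varphi)=E_r(\tilde u,\varphi)$, and I may assume $u\in KS^{1,2}(X)$. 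Next I would polarize, using $e_r(u,\varphi)=\tfrac14\big(e^2_r(u+\varphi)-e^2_r(u-\varphi)\big)$, fix $r_0>0$ and $\psi\in C_c(X)$ with $\psi\equiv 1$ on $N_{r_0}(\spt\varphi)$, and for $0<r<r_0$ write $E_r(u,\varphi)=\int_X\psi e_r(u,\varphi)\,\ud\mu=\tfrac14\int_X\psi\big(e^2_r(u+\varphi)-e^2_r(u-\varphi)\big)\,\ud\mu$. Passing to the limit via the KS-Dirichlet form convergence of Definition \ref{def:ksexist} applied to $u\pm\varphi\in KS^{1,2}(X)$, and using that the carré du champ $e(u,\varphi)$ is supported in $\spt\varphi$ by strong locality, yields $\lim_{r\to 0}E_r(u,\varphi)=\int_Xe(u,\varphi)\,\ud\mu=\mathcal E_{KS}(u,\varphi)$. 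Thus (2) is equivalent to the Euler--Lagrange identity above.

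It remains to match (1) with the same identity, which is the routine quadratic computation. By bilinearity of $e$ and strong locality, for compactly supported $\varphi$ the difference $e^2(u+\varphi)-e^2(u)=2e(u,\varphi)+e^2(\varphi)$ is supported in $\spt\varphi$, so $E^2_{KS}(u+\varphi)-E^2_{KS}(u)=2\mathcal E_{KS}(u,\varphi)+E^2_{KS}(\varphi)$ is well defined even when $u$ has infinite global energy. Testing with $t\varphi$ and letting $t\to 0^{\pm}$ shows local minimality forces $\mathcal E_{KS}(u,\varphi)=0$, while conversely $\mathcal E_{KS}(u,\varphi)=0$ gives $E^2_{KS}(u+\varphi)-E^2_{KS}(u)=E^2_{KS}(\varphi)\ge 0$, i.e. local minimality. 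I expect the main obstacle to lie not here but in the limit $E_r(u,\varphi)\to\mathcal E_{KS}(u,\varphi)$: the hypothesis of Definition \ref{def:ksexist} only furnishes convergence of the \emph{quadratic} densities $e^2_r(w)$ tested against $C_c$ functions, so the delicate points are justifying the polarization at the level of the integrated densities and confining $e_r(u,\varphi)$ to a fixed compact neighbourhood of $\spt\varphi$, which is what legitimizes inserting a single cutoff $\psi$ and then removing it against the locally supported limit.
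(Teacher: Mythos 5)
Your proof is correct and follows essentially the same route as the paper's own: identify local minimality of $E^2_{KS}$ with the Euler--Lagrange identity $\mathcal E_{KS}(u,\varphi)=0$, use Lemma \ref{lem:selfadjbasic}(a) to write $\int_X\varphi\widetilde\Delta_ru\,\ud\mu=-E_r(u,\varphi)$, and pass to the limit via Definition \ref{def:ksexist} together with the polarization identity $e(u,\varphi)=\lim_{r\to 0}e_r(u,\varphi)$ already recorded in Section \ref{sec:ks}. The only difference is that you spell out the cutoff reduction to $KS^{1,2}(X)$, the confinement of $e_r(u,\varphi)$ to $N_r(\spt\varphi)$, and the insertion and removal of the cutoff $\psi$ --- bookkeeping that the paper's two-line proof leaves implicit --- so this is a welcome expansion of the same argument rather than a different one.
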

\begin{proof}
Recall that local minimizers of $E^2_{KS}$ in $\Omega$ are characterized by the property that $\mathcal E_{KS}(\varphi,u)=0$ for every $\varphi\in KS^{1,2}(\Omega)$ with compact support in $\Omega$. Recall also, that the carr\'e du champ for $\mathcal E_{KS}$ is $e(u,v)$. By Lemma \ref{lem:selfadjbasic} and the existence of the Korevaar--Schoen energy, this is equivalent with
\[
\lim_{r\to 0}\int_X\varphi\widetilde\Delta_ru\ud\mu=0.
\]
\end{proof}
\begin{remark}
	More generally, we have the following: if $u\in D(\Delta_{KS};\Omega)$, then
	\[
	\int_X\varphi\Delta_{KS}u\ud\mu=-\mathcal E_{KS}(\varphi,u)=\lim_{r\to 0}\int_X\varphi\widetilde\Delta_ru\ud\mu
	\]
	for each $\varphi\in KS^{1,2}(\Omega)$ with compact support in $\Omega$. 
\end{remark}

\subsection{Carnot groups}\label{sec:Carnot} 
%
%
%
%
%

A Carnot group $\mathbb G$ is a connected and simply connected nilpotent Lie group whose Lie algebra $\mathfrak{g}$ is finite dimensional and admits a stratification of step $s\in \N$
\begin{align*}
\mathfrak{g}=V_1\oplus\cdots\oplus V_s
\end{align*}
with the property that $[V_1,V_j]=V_{j+1}$ for $j=1,\ldots, s-1$ and $[V_1,V_s]=\{0\}$.  $v_i:=\dim V_i$ and set $h_0=0$, $h_i=v_1+\cdots +v_i$ and $m=h_s$. By using the exponential map, every Carnot group $\mathbb{G}$ of step $s$ is isomorphic as a Lie group to $(\R^m, \cdot)$, with the group operation given by the Baker--Campbell--Hausdorff formula.

%
 Moreover, the stratification gives rise to a family of dilations $\delta_t:\mathbb G\to \mathbb G$ for $t>0$, given by
\begin{align*}
\delta_t(x_1,\ldots,x_m)=(t^{\sigma_1}x_1,\ldots,t^{\sigma_s}x_m),
\end{align*}
where the homogeneities $\sigma_i$ are defined by $\sigma_j:=i$ for $h_{i-1}<j\leq h_i$.


A continuous function $\rho:\mathbb{G} \to [0,\infty)$ is said to be a \emph{pseudonorm} on $G$, if $\rho$ satisfies the following conditions:
\begin{enumerate}
\item $\rho(\delta_r (x)) = r\rho(x)$ for every $r > 0$ and $x \in G$,
\item $\rho(x) > 0$ if and only if $x \ne 0$,
\item $\rho$ is symmetric, i.e. $\rho(x^{-1})=\rho(x)$ for every $x \in G$.
\end{enumerate}
A pseudonorm defines a pseudodistance on a group. Examples of such pseudodistances include the Kor\'anyi--Reimann (gauge) distance and the following :
\begin{equation}\label{G-norm}
 |x|_{\mathbb{G}}:=|(x^{(1)},\ldots, x^{(s)})|_{\mathbb{G}}:=\Big(\sum_{j=1}^s \|x^{(j)}\|^{\frac{2
	s!}{j}}\Big)^{\frac{1}{2s!}} \qquad \ud_{\mathbb G}(x,y):=|y^{-1}\cdot x|_{\mathbb{G}},
\end{equation}
where $x^{(i)}=(x_{h_{i-1}+1},\ldots, x_{h_i})$ for $i=1,\ldots, s$, and $\|x^{(j)}\|$ stands for Euclidean norm in $\R^{v_j}$. We observe that with the above notation, the coordinates in $\mathbb{G}$ can be given a form $x=(x^{(1)},x^{(2)},\ldots,x^{(s)})$.

The number $Q:=\sum_{i=1}^s i\dim(V_i)$ is called the \emph{homogeneous dimension} of $\mathbb G$. The Lebesgue measure $\mathcal L^m=:\mu$ is the Haar measure on $\mathbb G$. Moreover, for all $t>0$ it holds $\mu(\delta_t(E))=t^Q\mu(E)$, for a measurable $E\subset \mathbb G$. In particular, $\mu(B(x,r))=cr^Q$ for any left-invariant metric ($c$ depends on the choice of metric) and the Hausdorff-dimension of $\mathbb G$ equals $Q$.

Next we come to Sobolev spaces. Let $\Omega\subset\mathbb G$ be a domain and $X=\{X_1,\ldots, X_m\}$ a Jacobian basis of $\mathfrak{g}$, see \cite[Chapter 1.3]{blu}.
%
We define the horizontal Sobolev space $HW^{1,p}(\Omega)$ as space of measurable functions $u\in L^p(\Omega)$ whose distributional horizontal derivatives $X_ju$ are $L^p$-functions. We denote the horizontal gradient of $u$ by 
\begin{align*}
\nabla_Hu=\sum_{j=1}^{v_1}(X_ju)X_j.
\end{align*}
The norm $\|u\|_{1,p}^p=\|u\|_{L^p(\Omega)}^p+\||\nabla_Hu|\|_{L^p(\Omega)}^p$ makes $HW^{1,p}(\Omega)$ into a Banach space. We also define the horizontal Laplacian $\Delta_{\mathbb G}u$ of a $C^2$-function $u:\Omega\subset \mathbb G\to \R$ by
\begin{equation}\label{eq:subelliptic_laplacian}
	\Delta_{\mathbb G}u:= \sum_{j=1}^{v_1} X_j^2 u.
\end{equation}

The various different notions on Sobolev spaces on $\Omega\subset \mathbb G$ agree, since $\mathbb G$ supports a 1-Poincar\'e inequality, cf.~\cite{HajKosk}. We mention here that, for $u\in HW^{1,2}(\Omega)$, the function $|\nabla_H u|$ is a minimal weak upper gradient of $u$ when $\mathbb G$ is equipped with the Carnot--Carath\'eodory metric.

Below we will also use the approximate Pansu differential $Du_x:\mathbb G\to \R$ of a Sobolev function $u\in HW^{1,2}_{loc}(\Omega)$ at almost every $x\in\Omega$, which can be seen as a natural derivative in Carnot groups and plays an important role in function and mapping theory (e.g. in quasiconformal analysis). The following property relating the Pansu differential and the horizontal gradient of a function can be found in \cite[Remark 3.3]{mo-sc}, see also \cite[Corollary 3.5]{capcow}.
 \begin{align*}
Du_x(z)=\langle\nabla_Hu(x),z^{(1)}\rangle,\quad z=(z^{(1)},\ldots, z^{(s)})\in \mathbb G.
\end{align*}

\begin{prop}\label{prop:carnot}
	Let $\Omega\subset\mathbb G$ be a domain a Carnot group and
	$\rho$ be a pseudonorm on $\mathbb G$ of the form
	\begin{align}\label{eq:carnorm}
	\rho(z)=F(\|z^{(1)}\|, z'),\quad z=(z^{(1)},z')\in \mathbb G, 
	\end{align}

	where $z'=(z^{(2)},\ldots,z^{(s)})$ and $F:\R\times \R^{m-v_1}\to \R$ is $C^1$ outside the origin with $\partial_1F>0$. Then for every $u\in HW^{1,2}(\Omega)$ we have
	\[
	e^2(u)=c|\nabla_Hu|^2
	\]
	almost everywhere, where $c$ is a constant depending only on $\rho$, the data of $\mathbb G$, and $e(u)$ is the KS-energy density of $u$, cf. Definition \ref{def:ksexist}.
\end{prop}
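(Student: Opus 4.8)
The plan is to evaluate the weak limit defining $e^2(u)$ explicitly, exploiting the homogeneous structure of $\mathbb G$: first for Lipschitz functions, where a pointwise computation is available, and then for general $u\in HW^{1,2}(\Omega)$ by density. Write $B:=B_1(0)$ for the $\rho$-unit ball and recall that the Haar measure $\mu=\Ha^Q$ is left-invariant and satisfies $\mu(\delta_r(E))=r^Q\mu(E)$, so in particular $\mu(B_r(x))=r^Q\mu(B)$. Changing variables $y=x\cdot\delta_r(w)$ in the approximate energy density (the translation $w\mapsto x\cdot w$ is measure preserving and $\delta_r$ scales $\mu$ by $r^Q$) gives, for every $x$,
\[
e^2_r(u)(x)=\frac{1}{2\mu(B)}\int_B\left|\frac{u(x\cdot\delta_r(w))-u(x)}{r}\right|^2\ud\mu(w).
\]
This moves the whole $r$-dependence into the integrand on the \emph{fixed} ball $B$, and reduces the proposition to identifying the limit of this integrand.

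For a Lipschitz $u$, Pansu's differentiability theorem provides, at $\mu$-a.e.\ $x$, a Pansu differential, and by the relation recorded before the statement,
\[
\frac{u(x\cdot\delta_r(w))-u(x)}{r}\longrightarrow Du_x(w)=\langle\nabla_H u(x),w^{(1)}\rangle,\qquad r\to 0,
\]
for each $w$. The Lipschitz bound $|u(x\cdot\delta_r(w))-u(x)|\le \LIP(u)\,\rho(\delta_r(w))=\LIP(u)\,r\,\rho(w)\le \LIP(u)\,r$ on $B$ dominates the integrand uniformly, so dominated convergence yields
\[
e^2(u)(x)=\frac{1}{2\mu(B)}\int_B\langle\nabla_H u(x),w^{(1)}\rangle^2\ud\mu(w)\qquad\text{for a.e.\ }x.
\]

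To evaluate the constant I use the symmetry of $B$. Since $\rho(z)=F(\|z^{(1)}\|,z')$ depends on $z^{(1)}$ only through its Euclidean norm, both $\rho$ and $\mu=\mathcal L^m$ are invariant under the linear maps $(z^{(1)},z')\mapsto(Rz^{(1)},z')$ with $R\in O(v_1)$; hence the symmetric matrix $M:=\big(\int_B w^{(1)}_iw^{(1)}_j\ud\mu(w)\big)_{i,j}$ satisfies $RMR^{-1}=M$ for all $R\in O(v_1)$ and is therefore a scalar multiple $A\,\mathrm{Id}$ of the identity. Consequently $\int_B\langle\xi,w^{(1)}\rangle^2\ud\mu(w)=A|\xi|^2$ for every $\xi\in\R^{v_1}$, and we obtain $e^2(u)=c|\nabla_H u|^2$ with $c=A/(2\mu(B))$, a constant depending only on $\rho$ and the data of $\mathbb G$.

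It remains to pass from Lipschitz functions to all of $HW^{1,2}(\Omega)$. Lipschitz functions are dense in $HW^{1,2}$ (as $\mathbb G$ supports a Poincar\'e inequality, so that $KS^{1,2}=N^{1,2}=HW^{1,2}$ with equivalent norms), so I choose Lipschitz $u_k\to u$ in $HW^{1,2}$; then $|\nabla_H u_k|^2\to|\nabla_H u|^2$ in $L^1$. Using bilinearity, $e^2(u_k)-e^2(u)=e(u_k-u,u_k+u)$, and the pointwise Cauchy--Schwarz inequality for the carr\'e du champ together with the norm comparison give
\[
\int_\Omega|e^2(u_k)-e^2(u)|\ud\mu\le \big(E^2_{KS}(u_k-u)\big)^{1/2}\big(E^2_{KS}(u_k+u)\big)^{1/2}\le C\|u_k-u\|_{HW^{1,2}}\,\|u_k+u\|_{HW^{1,2}}\to 0.
\]
Hence $e^2(u_k)\to e^2(u)$ in $L^1$, and comparing with $e^2(u_k)=c|\nabla_H u_k|^2$ yields the claimed identity almost everywhere. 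I expect the genuine work to lie in this final density step---justifying the $L^1$-convergence of the energy densities along the approximation and thereby transferring the Lipschitz identity to arbitrary Sobolev functions---whereas the change of variables, the Pansu limit, and the symmetry computation are comparatively routine.
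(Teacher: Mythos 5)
Your route is genuinely different from the paper's. The paper does not prove existence of the density by hand: it cites Tan's theorem (\cite[Theorem 4.1]{tan}) to get, for \emph{every} $u\in HW^{1,2}$, the formula $e(u)(x)=\vint_{B_\rho}|Du_x(z)|^2\,\ud z$ with the approximate Pansu differential, and then evaluates the constant by slicing the unit ball into the sets $\{\|z^{(1)}\|\le G(z')\}$ (this is where the hypotheses that $F$ is $C^1$ with $\partial_1F>0$ are used, to produce the slicing function $G$), applying Fubini and scaling in the $z^{(1)}$-variable, and invoking \cite[Lemma 4.18]{gt20} for the Euclidean ball. Your change of variables $y=x\cdot\delta_r(w)$, the Pansu-differentiability/dominated-convergence computation for Lipschitz $u$, and the $O(v_1)$-invariance argument identifying the second-moment matrix as $A\,\mathrm{Id}$ are all correct, and the symmetry argument is arguably cleaner than the paper's slicing: notably, it uses only the structural form $\rho(z)=F(\|z^{(1)}\|,z')$ and not the $C^1$ regularity or monotonicity of $F$, so (granted the rest) your proof would hold under weaker hypotheses.

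There is, however, a genuine gap in the final density step, exactly where you predicted the work would lie. You write $e^2(u_k)-e^2(u)=e(u_k-u,u_k+u)$ and estimate via Cauchy--Schwarz for the carr\'e du champ, but for a general $u\in HW^{1,2}$ the objects $e^2(u)$ and $e(\cdot,\cdot)$ are precisely the weak limits whose \emph{existence} you are trying to establish (cf.\ Definition \ref{def:ksexist}); as written the argument is circular. The repair is to run the estimate at fixed $r$, where $e_r$ honestly is bilinear and satisfies the pointwise Cauchy--Schwarz inequality: for $\varphi\in C_c(\Omega)$,
\begin{align*}
\left|\int\varphi\, e_r^2(u)\,\ud\mu-\int\varphi\, e_r^2(u_k)\,\ud\mu\right|
\le \left(\int|\varphi|\, e_r^2(u-u_k)\,\ud\mu\right)^{1/2}\left(\int|\varphi|\, e_r^2(u+u_k)\,\ud\mu\right)^{1/2},
\end{align*}
and then one needs the uniform-in-$r$ bound $\sup_{0<r<r_0}\int|\varphi|\, e_r^2(v)\,\ud\mu\le C_\varphi\|v\|_{HW^{1,2}}^2$, which your appeal to $E^2_{KS}(v)\le C\|v\|^2_{HW^{1,2}}$ does not supply, since $E^2_{KS}$ only controls a limsup against test functions, not each approximate energy. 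That uniform bound is true in Carnot groups (doubling plus the $1$-Poincar\'e inequality, via the standard telescoping/maximal function estimate underlying the embedding $N^{1,2}\hookrightarrow KS^{1,2}$ in \cite[Theorem 10.5.3]{HKST}), and with it a limsup/liminf squeeze gives $\lim_{r\to0}\int\varphi\, e_r^2(u)\,\ud\mu=c\int\varphi\,|\nabla_Hu|^2\ud\mu$, completing your argument. So the gap is fixable, but the key uniform estimate must be stated and used in place of the circular manipulation of the limit objects.
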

\begin{remark}\label{rmk:carnot}
A particular class of homogeneous norms satisfying \eqref{eq:carnorm} is given as follows. Consider a norm $N$ on $\R^s$ with $\partial_1N>0$, and norms $\|\cdot\|_i$ on $V_i$ for $i=2,\ldots, s$, which need not necessarily be the Euclidean ones. The function
\begin{align}\label{eq:homognorm}
\rho(z):=N(\|z^{(1)}\|,\|z^{(2)}\|_2^{1/2},\ldots,\|z^{(s)}\|_s^{1/s}),\quad z=(z^{(1)},\ldots,z^{(s)})\in \mathbb G
\end{align}
is a homogeneous norm. This class includes the Kor\'anyi--Reimann gauge~\eqref{G-norm}. 
%

\end{remark}
\begin{proof}
By the proof of \cite[Theorem 4.1]{tan}, we have that the KS-energy density exists and satisfies
\begin{align*}
e(u)(x)=\vint_{B_\rho}|Du_x(z)|^2\ud z=\vint_{B_\rho}|\langle\nabla_Hu,z^{(1)}\rangle|^2\ud z
\end{align*}
for a.e $x\in \Omega$. The last equality follows from the remark on the Pansu differential above. Here $B_\rho$ is the unit ball with respect to the pseudonorm $\rho$.

Since every two pseudonorms are equivalent (see~\cite[Proposition 5.1.4]{blu} and \cite[Remark 2]{adw}) we have, in particular, that by~\eqref{G-norm} it holds
\[
\{ z\in \mathbb{G}:  |z|_{\mathbb{G}}\leq 1/C\} \subset \{ z \in \mathbb{G}: F(\|z^{(1)}\|,z')\le 1\} \subset \{ z\in \mathbb{G}:  |z|_{\mathbb{G}}\leq C\}
\]
This observation together with the assumptions on $\rho$ imply that, for each $z'\in M$, where $M\subset \R^{m-v_1}$ is a bounded (and closed) set, the function $F(\cdot,z')$ is invertible and thus there exists a continuous function $G:M\to \R$ such that for each fixed $z'$
\begin{align*}
\{ (z^{(1)}, z')\in \mathbb{G}:\ \rho(z^{(1)},z')\le 1 \}=\{ (z^{(1)}, z')\in \mathbb{G}:\|z^{(1)}\|\le G(z') \}.
\end{align*}

Using this and the change of variables, we compute
\begin{align*}
\int_{B_\rho}|\langle\nabla_Hu,z^{(1)}\rangle|^2\ud z^{(1)}=&\int_{M} \left(\int_{\{ \|z^{(1)}\|\le G(z')\}}|\langle\nabla_Hu(x), z^{(1)}\rangle|^2\ud z^{(1)} \right)\ud z'\\
=&\int_{\{ z'\in M: G(z')\geq 0 \}} G(z')^{2+k}\left(\int_{\{ \|z^{(1)}\|\le 1\}}|\langle\nabla_Hu(x),z^{(1)}\rangle|^2\ud z^{(1)}\right)\ud z'\\
=&c\int_{\{ \|z^{(1)}\|\le 1\}}|\langle\nabla_Hu(x),z^{(1)}\rangle|^2\ud z^{(1)},
\end{align*}
where $c$ is a constant. Indeed, since $G$ is continuous and $M$ is bounded, the integral of $G^{2+k}$ is finite. Then, by \cite[Lemma 4.18]{gt20} we have that
 $$
 \int_{\{ \|z^{(1)}\|\le 1\}}|\langle\nabla_Hu(x),z^{(1)}\rangle|^2\ud z^{(1)}=c(v_1)|\nabla_
Hu(x)|^2.
$$ 
This completes the proof.
\end{proof}

We finish the subsection by providing the proof of Theorem \ref{cor:carnot}.

\begin{proof}[Proof of Theorem \ref{cor:carnot}]
By Proposition \ref{prop:carnot} we have that the energy density $e^2(u)$ exists and satisfies \begin{equation}\label{eq:en_dens_hor_grad}
e^2(u)=c|\nabla_Hu|^2
\end{equation}
for every $u\in HW^{1,2}_{loc}(\Omega)$. Since $|\nabla_Hu|$ is the minimal weak upper gradient of $u$ for the metric measure space $(\Omega,\Ha^Q,d_{cc})$ (cf. \cite[Proposition 11.6 and Theorem 11.7]{HajKosk} for smooth functions from which the claim follows by approximation) it follows that $u\mapsto E_{KS}^2(u)=\int_\Omega e^2(u)\ud\Ha^Q$ is lower semicontinuous and thus the KS-energy is a Dirichlet form in the sense of Definition \ref{def:ksexist}. Moreover by \cite[Part I, Section 1.5 (A5)]{blu} $\Delta_\mathbb G$ is the self-adjoint operator associated to the quadratic form $u\mapsto \int_\Omega|\nabla_Hu|^2\ud z$. This and \eqref{eq:en_dens_hor_grad} imply that $\Delta_{KS}$ is a constant multiple of the sublaplacian $\Delta_\mathbb G$.

Since $\Delta_ru=\widetilde{\Delta}_ru$ for small enough $r$ in any domain compactly contained in $\Omega$ (by the translation invariance of $\Ha^Q$), Proposition \ref{prop:modified} implies that $u\in HW^{1,2}_{loc}(\Omega)$ is weakly amv-harmonic if and only if $\Delta_\mathbb Gu=0$, showing that (ii)$\Leftrightarrow$(iii). Since (i)$\Rightarrow$ (ii) trivially, it remains to prove (iii)$\Rightarrow$(i).

If $\Delta_\mathbb Gu=0$ then $u\in C^\infty(\Omega)$. Now we may argue exactly as in the proof of \cite[Lemma 4.1]{fp} to obtain 
\begin{equation}\label{eq:strong_amv}
u(x)=u_{B_r(x)}+Cr^2\Delta_\mathbb Gu+o(r^2)
\end{equation}
using the second order Taylor expansion \cite[Lemma 2.7]{fp}: the first order terms cancel out since balls $B_r(x)$ with respect to $\rho$ are symmetric, and the computation(s) in the proof of \cite[Lemma 4.1]{fp} remain valid with the sets $S(\varepsilon,x^{(2)},\ldots,x^{(s)})$ (in the notation of \cite{fp}) replaced by the sets $\{ \|z^{(1)}\|\le G(z')\}$ from the proof of Proposition \ref{prop:carnot}. We obtain the constant
\begin{align*}
	C:=\frac{1}{2v_1}\vint_{B_\rho}\|z^{(1)}\|^2\ud z
\end{align*}
in \eqref{eq:strong_amv} where $B_\rho$ stands for the unit ball of $\rho$.

Finally we note that the error term $o(r^2)$ in \eqref{eq:strong_amv} is locally uniform in $x$ (since the same is true for the second order Taylor expansion of a smooth function). This implies that $\Delta_ru(x)\to 0$ as $r\to 0$ locally uniformly in $x\in \Omega$, establishing (iii)$\Rightarrow$(i).

\end{proof}

\subsection{RCD-spaces}\label{sec:rcd}
RCD-spaces arise from the study of synthetic curvature bounds, initially stated in terms of convexity properties of entropy functionals in Wasserstein spaces for Riemannian manifolds, cf. \cite{villani,st1}. A \emph{Riemannian} theory of the resulting \emph{curvature dimension conditions} ($CD(K,N)$-conditions) is obtained by requiring in addition the quadratic property for the Cheeger energy, see Section \ref{sec:ks}. The notion of RCD-spaces was introduced and substantially developed by Ambrosio--Gigli--Savare, cf. \cite{ags1}. In terms of the $\Gamma$-calculus introduced in Section \ref{sec:ks} the $RCD(K,N)$-condition with parameters $K\in \R$ and $N\ge 1$ on a metric measure space $(X,d,\mu)$ can be characterized by the Bochner inequality
\begin{align*}
\frac 12\int |\nabla f|^2\Delta g\ud\mu\ge \int g\left(K|\nabla f|^2+\langle\nabla f,\nabla \Delta f\rangle + \frac 1N (\Delta f)^2\right)\ud\mu
\end{align*}
for any $f\in D(\Delta)$ with $\Delta f\in W^{1,2}(X)$ and any $g\in L^\infty(X)\cap D(\Delta)$.

A cornerstone of RCD-theory is the \emph{Bishop--Gromov} inequality, due to Sturm \cite{sturm2}. To state it, define $s_{K,N},v_{K,N}:[0,\infty)\to \R$ (for $K\in \R$ and $N\ge 1$) by
\begin{align*}
	s_{K,N}(t)=\left\{
	\begin{array}{clr}
		\sqrt{\frac{N-1}{K}}\sin\left(t\sqrt{\frac{K}{N-1}}\right) &\quad K>0, & \\
		t &\quad K=0, & \qquad v_{K,N}(r):=N\omega_N\int_0^rs_{K,N}(t)^{N-1}\ud t,\\
		\sqrt{\frac{N-1}{-K}}\sinh\left(t\sqrt{-\frac{K}{N-1}}\right) &\quad K<0, & 
	\end{array}
	\right.
\end{align*}
where $\omega_N$ the the Lebesgue measure of the unit ball in $\R^N$.
\begin{theorem}[cf. Theorem 2.3, Remark 5.3 in~\cite{sturm2}]\label{thm:bishop-gromov}
	Let $X$ be an $RCD(K,N)$-space, for $K\in\R$ and $N\ge 1$. Then for any $x\in X$, the function
	\[
	r\mapsto \frac{\mu(B_r(x))}{v_{K,N}(r)}
	\]
	is non-increasing on the interval $\left(0,\pi\sqrt{\frac{N-1}{\max\{K,0\}}}\right)$.
\end{theorem}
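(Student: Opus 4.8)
The plan is to derive the inequality from the curvature--dimension structure underlying RCD-spaces by optimal transport, specifically from the \emph{measure contraction property} $MCP(K,N)$, which every $RCD(K,N)$-space enjoys (being $CD(K,N)$, hence $MCP(K,N)$). Fix $x\in X$. The idea is to disintegrate $\mu$ along the family of unit-speed geodesic rays emanating from $x$ and to reduce the multidimensional statement to a one-dimensional comparison of the resulting conditional densities against the model radial density $s_{K,N}(\theta)^{N-1}$, where $\theta$ denotes the distance to $x$.

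First I would record the implications $RCD(K,N)\Rightarrow CD(K,N)\Rightarrow MCP(K,N)$ and invoke \emph{essential non-branching} of RCD-spaces (Rajala--Sturm) to guarantee that $\mu$-a.e.\ point lies on a \emph{unique} geodesic issuing from $x$. This makes the radial contraction $\phi_s$, sending $y$ to the point at parameter $s\in[0,1]$ on the geodesic from $x$ to $y$ (so that $d(x,\phi_s(y))=s\,d(x,y)$), well defined $\mu$-almost everywhere, and via the localization/needle decomposition (Cavalletti--Mondino) it yields a disintegration
\[
\mu=\int_\Xi h_\xi(\theta)\,\ud\theta\,\ud\nu(\xi)
\]
indexed by the rays $\xi\in\Xi$, with $\theta$ the arclength parameter and $h_\xi$ the conditional density (extended by $0$ past the endpoint of $\xi$). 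The content of $MCP(K,N)$, read through this disintegration, is precisely the distortion estimate asserting that along $\nu$-a.e.\ ray $\xi$ the ratio $\theta\mapsto h_\xi(\theta)/s_{K,N}(\theta)^{N-1}$ is non-increasing on the range where $s_{K,N}>0$; this encodes the fact that in the model space one has $h\propto s_{K,N}^{N-1}$ exactly, and that setting $h_\xi=0$ beyond the endpoint of $\xi$ only decreases the ratio.

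I would then invoke the elementary fact that if $p/q$ is non-increasing with $q>0$, then $r\mapsto \int_0^r p\big/\int_0^r q$ is non-increasing (its derivative has numerator $\int_0^r q(r)q(\theta)\bigl[\tfrac{p(r)}{q(r)}-\tfrac{p(\theta)}{q(\theta)}\bigr]\,\ud\theta\le 0$). Applying this with $p=h_\xi$ and $q=s_{K,N}^{N-1}$ shows that $P_\xi(r)/Q(r)$ is non-increasing for each $\xi$, where $P_\xi(r)=\int_0^r h_\xi\,\ud\theta$ and $Q(r)=\int_0^r s_{K,N}(\theta)^{N-1}\,\ud\theta$. Since $Q$ is independent of $\xi$ and $v_{K,N}(r)=N\omega_N\,Q(r)$, integrating the disintegration over $\Xi$ gives
\[
\frac{\mu(B_r(x))}{v_{K,N}(r)}=\frac{1}{N\omega_N}\int_\Xi \frac{P_\xi(r)}{Q(r)}\,\ud\nu(\xi),
\]
an $\nu$-average of functions each non-increasing in $r$; hence the average is non-increasing. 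The restriction of $r$ to $\bigl(0,\pi\sqrt{(N-1)/\max\{K,0\}}\bigr)$ is exactly the range on which $s_{K,N}>0$, i.e.\ below the model conjugate radius, which is what keeps $Q(r)>0$ and the one-dimensional comparison meaningful.

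The routine one-dimensional bookkeeping is not where the difficulty lies; the main obstacle is the rigorous construction of the disintegration and the $\nu$-a.e.\ validity of the density comparison in the nonsmooth setting. This is precisely what essential non-branching secures (so that the radial map $\phi_s$ and the ray decomposition are well defined $\mu$-a.e.), and what the localization technique converts into genuine one-dimensional $MCP$ inequalities for the conditional densities $h_\xi$. Verifying that $MCP(K,N)$ indeed forces $h_\xi/s_{K,N}^{N-1}$ to be non-increasing $\nu$-a.e., in a form uniform enough to integrate against $\ud\nu$, is the technical heart of the argument; this is exactly the estimate established by Sturm, to which the theorem refers.
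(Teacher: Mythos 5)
Your argument is correct, but note first that the paper itself contains no proof of Theorem \ref{thm:bishop-gromov}: it is imported directly from Sturm \cite{sturm2} (Theorem 2.3 and Remark 5.3), so the meaningful comparison is with Sturm's argument. Sturm also derives the statement from the measure contraction property, but implements it without any disintegration: he applies the defining contraction estimate $\mu(\phi_s(A))\ge \int_A s\,\bigl(s_{K,N}(s\,d(x,y))/s_{K,N}(d(x,y))\bigr)^{N-1}\ud\mu(y)$ directly to balls and annuli $A$ and integrates in the contraction parameter $s$, which yields the annular volume comparison and hence the monotonicity of $r\mapsto \mu(B_r(x))/v_{K,N}(r)$ by elementary calculus -- no structure theory beyond the $MCP$ inequality itself is needed. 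Your route instead front-loads machinery that postdates Sturm's paper: essential non-branching of RCD-spaces (Rajala--Sturm), a.e.\ uniqueness of geodesics from $x$, and the existence of the ray disintegration $\mu=\int_\Xi h_\xi(\theta)\,\ud\theta\,\ud\nu(\xi)$ with absolutely continuous conditionals satisfying the one-dimensional $MCP$ comparison (Bianchini--Cavalletti, Cavalletti--Mondino). Granting those inputs, your bookkeeping is sound: the $1$D reading of $MCP$ does give that $h_\xi/s_{K,N}^{N-1}$ is non-increasing $\nu$-a.e.\ (your contraction computation $h_\xi(s\theta)/s_{K,N}(s\theta)^{N-1}\ge h_\xi(\theta)/s_{K,N}(\theta)^{N-1}$ is exactly right), the ratio-of-integrals lemma is correct as computed, and averaging in $\xi$ preserves monotonicity since $Q$ is $\xi$-independent. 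What your heavier approach buys is a genuinely stronger, localized conclusion -- a.e.\ monotonicity along each needle, which also yields sphere-measure comparisons -- at the cost of deep structural theorems where Sturm's proof is essentially self-contained. Two minor points worth flagging: the implication $CD(K,N)\Rightarrow MCP(K,N)$ that you invoke is itself only known under (essential) non-branching, so it leans on Rajala--Sturm as well; and the identity $\mu(B_r(x))=\int_\Xi P_\xi(r)\,\ud\nu(\xi)$ implicitly uses $\mu(\{x\})=0$, which does hold in this setting but deserves a word.
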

Note that $v_{K,N}(r)=\omega_Nr^N+O(r^{N+2})$ (cf. \cite{klp17}). However, while $N$ plays the role of dimension upper bound it need not equal Hausdorff dimension of $X$.

Rectifiability properties of RCD-spaces were established in \cite{mon-naber19} and improved in \cite{brue-semola20}, where the \emph{constancy of dimension} of RCD-spaces was proved.
\begin{theorem}[Theorem 3.8 in \cite{brue-semola20}]\label{thm:bruesemola}
Let $(X,d,\mu)$ be an $RCD(K,N)$-space, $K\in \R$ and $N\ge 1$. There exists a natural number $n\le N$ such that for $\mu$-almost every $x\in X$, $\R^n$ (with the Euclidean norm and Lebesgue measure) is the unique tangent cone of $X$ at $x$.
\end{theorem}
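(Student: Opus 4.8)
The plan is to follow the regular Lagrangian flow strategy, reducing the constancy of dimension to a flow-invariance statement for the measurable decomposition of $X$ into regular sets. First I would recall the rectifiability structure: by Mondino--Naber \cite{mon-naber19} there is a decomposition $X=\mathcal N\cup\bigcup_{n=1}^{\lfloor N\rfloor}\mathcal R_n$ with $\mu(\mathcal N)=0$, where the $n$-regular set $\mathcal R_n$ consists of those points whose unique tangent cone is $\R^n$ (so the assertion is equivalent to saying that exactly one of the sets $\mathcal R_n$ carries positive measure). Via Gigli's first-order differential calculus, the integer $n$ also coincides $\mu$-a.e.\ on $\mathcal R_n$ with the local dimension of the tangent module $L^2(TX)$; hence it suffices to prove that this local dimension function is $\mu$-a.e.\ constant.

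The key tool is the well-posedness theory of regular Lagrangian flows on RCD-spaces. For a Sobolev vector field $b$ with bounded divergence, the regular Lagrangian flow $F_t$ exists, is unique, and pushes $\mu$ forward to a measure with density controlled by $\exp(t\|\div b\|_\infty)$; in particular it maps $\mu$-null sets to $\mu$-null sets and is $\mu$-essentially invertible. I would then establish the central claim: the local dimension is invariant along such a flow, i.e.\ $n(F_t(x))=n(x)$ for $\mu$-a.e.\ $x$ and every $t$. The mechanism is that $F_t$ differentiates to an isomorphism between the tangent modules at $x$ and at $F_t(x)$ for a.e.\ $x$, so that the pointwise rank of the module --- equivalently the dimension of the tangent cone at a regular point --- cannot change. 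Consequently each $\mathcal R_n$ is, up to a null set, invariant under every regular Lagrangian flow.

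It then remains to show that no nontrivial measurable set can be invariant under all such flows. Here I would use that $X$ supports a $2$-Poincar\'e inequality and is connected, together with the density of test functions in $W^{1,2}(X)$: the gradient vector fields $\nabla f$ of a sufficiently rich class of functions $f$ (for instance obtained by applying the heat flow $P_t$ to a dense family, which guarantees Sobolev regularity and the bounded divergence $\Delta P_t f$) generate flows acting \emph{irreducibly} on sets of positive measure. Concretely, if both $\mathcal R_{n_1}$ and $\mathcal R_{n_2}$ with $n_1\neq n_2$ had positive measure, one could construct from such gradient fields a regular Lagrangian flow carrying a positive-measure subset of one set into the other, contradicting the flow-invariance established above. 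Hence only one $\mathcal R_n$ is non-null and $n$ is constant $\mu$-a.e.

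The main obstacle is the second paragraph: rigorously showing that the local dimension is preserved along the flow. This requires controlling how the regular Lagrangian flow interacts with blow-ups at $\mu$-a.e.\ point and with the tangent module structure, and is where the second-order calculus on RCD-spaces (existence of the Hessian and the improved Bochner inequality) enters, in order to guarantee the differentiability of $F_t$ and the resulting module isomorphism. A secondary difficulty is producing enough vector fields with the regularity demanded by the flow theory while retaining transitivity on positive-measure sets; this is handled by the heat-flow regularization, which simultaneously yields Sobolev regularity, boundedness of the divergence, and density of the resulting class.
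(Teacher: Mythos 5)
The first thing to note is that the paper contains no proof of this statement at all: it is imported verbatim as Theorem 3.8 of \cite{brue-semola20}, so the only meaningful comparison is with that reference --- whose architecture your sketch correctly reproduces in outline (the Mondino--Naber decomposition $X=\mathcal N\cup\bigcup_n\mathcal R_n$ from \cite{mon-naber19}, identification of $n$ with the a.e.\ rank of the tangent module, invariance of this dimension under regular Lagrangian flows, and a transitivity argument ruling out two regular sets of positive measure). So you have reconstructed the right skeleton, but the two steps you yourself flag as obstacles are not deferrable technicalities: each is a substantial theorem, and the specific mechanisms you propose for them would not go through as stated.

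The main gap is in your second paragraph. On an $RCD(K,N)$-space there is no known $\mu$-a.e.\ differentiability of a regular Lagrangian flow that would produce an isomorphism between the tangent modules at $x$ and at $F_t(x)$; the second-order calculus furnishes Hessians of functions, not a differential of the flow map, so ``the pointwise rank of the module cannot change'' does not follow from anything you have set up. What Bru\'e--Semola actually prove is a \emph{Lusin-type Lipschitz estimate}: for vector fields with symmetric derivative in $L^2$ and bounded divergence, the flow $F_t$ is Lipschitz when restricted to sets of arbitrarily large measure, and --- crucially --- Lipschitz with respect to a quasi-metric built from the Green function of the Laplacian rather than $d$ itself; establishing this is where most of the work in \cite{brue-semola20} lies. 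Dimension invariance is then deduced not by transporting any module structure but by elementary rectifiability: a map that is biLipschitz on a piece of positive measure cannot send a positive-$\mu$-measure subset of $\mathcal R_{n_1}$ into $\mathcal R_{n_2}$ for $n_1\neq n_2$, by comparison of Hausdorff measures, since $\mu\llcorner\mathcal R_n$ is absolutely continuous with respect to $\Ha^n$. Your third paragraph has a similar status: the asserted ``irreducibility'' of heat-regularized gradient flows on sets of positive measure is not self-evident (a priori such a flow could preserve a measurable partition), and in \cite{brue-semola20} it is replaced by a concrete construction of a connecting vector field, with quantitative Sobolev and divergence bounds verified so that the RLF theory applies, whose flow provably moves a set of positive measure from a neighbourhood of one density point to a neighbourhood of the other. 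In short: right framework, correct reduction, but both pillars of the argument are missing their actual proofs, and the differentiability mechanism you propose for the first pillar is unavailable in this setting.
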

The number $n$ above is called the \emph{essential dimension} of $X$, and $X$ is $n$-rectifiable, implying that $n$ equals the Hausdorff dimension. The essential dimension appears in a result of Gigli--Tyulenev \cite{gt20} which states that the Korevaar--Schoen energy is a constant multiple of the Cheeger energy, and thus a Dirichlet form, on RCD-spaces.

\begin{prop}[cf. Theorem 2.18 and Proposition 4.19 in~\cite{gt20}]\label{prop:gigtyu}
	Let  $X$ be an $RCD(K,N)$-space, for $K\in \R$ and $N\ge 1$, and let $n$ be the essential dimension of $X$. Then there is a constant $c=c(K,n)$ so that, if $\Omega\subset X$ is a domain, then for any $u\in KS^{1,2}(\Omega)$ we have that
	\begin{align*}
	\int_\Omega|\nabla u|^2\ud\mu=cE^2_{KS}(u;\Omega),
	\end{align*}
	where $|\nabla u|$ is the minimal weak upper gradient of $u$.
\end{prop}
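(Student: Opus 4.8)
The plan is to verify the identity pointwise at $\mu$-almost every point and then integrate. The whole strategy rests on the $n$-rectifiability and Euclidean tangent structure furnished by Theorem~\ref{thm:bruesemola}: it allows one to compute the limit of the approximate energy densities $e^2_r(u)$ by a blow-up. Accordingly, I would fix a point $x$ in the full-measure set where Theorem~\ref{thm:bruesemola} applies, so that the rescalings of $X$ by factor $1/r$ around $x$ converge, in the pointed measured Gromov--Hausdorff sense, to $(\R^n,|\cdot|,\mathcal L^n)$, with $n$ the essential dimension. By Bishop--Gromov (Theorem~\ref{thm:bishop-gromov}) together with the expansion $v_{K,N}(r)=\om_N r^N+O(r^{N+2})$, the normalized measures $\mu(B_r(x))^{-1}\mu$ converge to normalized Lebesgue measure on the unit ball. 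Invoking the first-order differential calculus on RCD-spaces, in particular the identification of the differential of a Sobolev function with its minimal weak upper gradient \cite{gt20}, for $\mu$-a.e. such $x$ the rescaled difference quotients $y\mapsto (u(y)-u(x))/r$ converge along the blow-up to the linear function $z\mapsto\langle\nabla u(x),z\rangle$ on $\R^n$, the Euclidean length of whose slope equals $|\nabla u(x)|$.

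Passing to the limit inside the average defining $e^2_r(u)$ then gives, for $\mu$-a.e. $x$,
\[
\lim_{r\to 0}e^2_r(u)(x)=\frac12\vint_{B_1(0)}|\langle\nabla u(x),z\rangle|^2\,\ud z=c_0\,|\nabla u(x)|^2,
\]
where $c_0$ is the Euclidean second moment $c_0=\tfrac12\vint_{B_1(0)}z_1^2\,\ud z=\tfrac{1}{2(n+2)}$, a constant depending only on $n$, any residual dependence on $K$ entering only through the measure normalization controlled above. This is precisely the elementary computation already carried out in the Carnot setting via \cite[Lemma 4.18]{gt20}.

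To upgrade this pointwise statement to the asserted integral identity I would establish uniform integrability of the family $\{e^2_r(u)\}_r$ on compact subsets of $\Om$. Using the doubling property and the $2$-Poincar\'e inequality available on RCD-spaces, a standard telescoping estimate bounds the integrand by a restricted maximal function of $g_u^2$, namely $e^2_r(u)(x)\le C\,M(g_u^2)(x)$; since $g_u\in L^2_{loc}$, the maximal function theorem yields the required equi-integrability. Combining the pointwise convergence, this domination, and the existence of the energy density in the sense of Definition~\ref{def:ksexist}, one obtains $\int_\Om\varphi\,e^2(u)\,\ud\mu=c_0\int_\Om\varphi\,|\nabla u|^2\,\ud\mu$ for every $\varphi\in C_c(\Om)$. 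Hence $E^2_{KS}(u;\Om)=c_0\int_\Om|\nabla u|^2\,\ud\mu$, which is the claimed identity with $c=c_0^{-1}$.

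The hard part is the blow-up step: rigorously identifying the blow-up limit of $(u(\cdot)-u(x))/r$ with a linear function of slope $|\nabla u(x)|$ for $\mu$-a.e. $x$, \emph{jointly} with the convergence of the rescaled ambient measures to $\mathcal L^n$. This is where the deep structure theory of \cite{mon-naber19,brue-semola20,gt20} is indispensable: one must couple the almost-everywhere differentiability of a generic Sobolev function with the pointed measured Gromov--Hausdorff convergence of the rescaled spaces to their Euclidean tangent, and control the interaction of these two limits. By contrast, the domination and integration step is routine once the Poincar\'e inequality is in hand.
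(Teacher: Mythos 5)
First, a point of comparison: the paper does not prove Proposition \ref{prop:gigtyu} at all --- it is imported directly from Gigli--Tyulenev (Theorem 2.18 and Proposition 4.19 of \cite{gt20}), so there is no internal argument to measure you against. Your sketch does follow the broad strategy of the cited proof: blow-up at $\mu$-a.e.\ point using the unique Euclidean tangents of Theorem \ref{thm:bruesemola}, normalization of the rescaled measures via Bishop--Gromov (Theorem \ref{thm:bishop-gromov}), identification of the rescaled difference quotients with the linear function $z\mapsto\langle\nabla u(x),z\rangle$, and the second-moment computation giving $c_0=\tfrac12\vint_{B_1}z_1^2\,\ud z=\tfrac{1}{2(n+2)}$, hence $c=2(n+2)$ depending only on $n$ --- all of this is correct in outline and consistent with \cite{gt20}. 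You also rightly flag the joint blow-up of function and measure as the deep step; but be aware that this step, together with the existence of the energy density you invoke from Definition \ref{def:ksexist}, is essentially \emph{the content} of the theorem being proved, so your argument is at that point a reduction to \cite{gt20} rather than an independent proof. A smaller omission: for $u$ merely in $KS^{1,2}(\Omega)$ you must first know that $g_u$ exists, i.e.\ $u\in N^{1,2}_{loc}$; this holds because RCD spaces are doubling and support a $2$-Poincar\'e inequality (the embedding $KS^{1,2}\hookrightarrow N^{1,2}$, cf.\ \cite[Theorem 10.5.3]{HKST}), and it should be stated before the blow-up step.

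The genuine gap is in the step you call routine. The telescoping/Poincar\'e argument does yield a pointwise bound of the form $e^2_r(u)(x)\le C\,M_{\lambda r}(g_u^2)(x)$ at a.e.\ $x$, but this does \emph{not} give equi-integrability: $g_u^2$ is merely in $L^1_{loc}$, and the Hardy--Littlewood maximal operator is unbounded on $L^1$, so $M(g_u^2)$ is in general only in weak-$L^1$ and cannot serve as an integrable dominating function. Restricting the maximal operator to small radii does not repair this --- there are $L^1$ functions (e.g.\ $h(x)=x^{-1}\log^{-2}x$ near $0$ on the line) whose restricted maximal functions $M_\delta h$ are non-integrable on every neighbourhood of a point, for every $\delta>0$. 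Hence pointwise convergence plus your domination does not justify passing the limit inside the integral, and "the maximal function theorem yields the required equi-integrability" is false as stated (it would need $g_u\in L^p$ with $p>2$). The standard repairs are: (i) prove the identity first for Lipschitz $u$, where $e^2_r(u)$ is locally uniformly bounded so dominated convergence applies, and then extend to $KS^{1,2}$ by density together with the $L^2$-lower semicontinuity of both energies; or (ii) follow \cite{gt20}, where the passage from infinitesimal to integral information is carried out through Egorov-type arguments on the Mondino--Naber charts of \cite{mon-naber19,brue-semola20} rather than through maximal-function domination. With either repair, and with the circularity of the blow-up step acknowledged, your outline matches the architecture of the cited proof.
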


We remark that by Proposition \ref{prop:gigtyu} local minimizers of $E^2_{KS}$ are exactly the local minimizers of $Ch_2$. It is known, cf. \cite{ksr}, that local minimizers of $Ch_2$ on a domain $\Omega$ are locally Lipschitz continuous in $\Omega$ and are characterized by the equation $\Delta u=0$. 

Next we concentrate on \emph{non-collapsed} RCD$(K,N)$ spaces $X=(X,d,\Ha^N)$. (For an earlier, slightly different, notion of non-collapsed RCD-spaces, see \cite{kitabeppu17}.) As a Corollary of the results above, non-collapsed RCD$(K,N)$-spaces are $N$-rectifiable and locally Ahlfors $N$-regular. Every point $x\in X$ has a tangent cone (which is an RCD$(0,N)$-space) and on a set of full measure the tangent cone is unique and isometrically isomorphic to $\R^N$. The \emph{singular set} $\mathcal S$, defined as the set of points which have a non-Euclidean tangent cone, can be characterized using the densities
\begin{align}\label{eq:bgdens}
	\theta^N_r(x)=\frac{\Ha^N(B_r(x))}{\omega_Nr^N},\quad \theta^N(x):=\lim_{r\to 0}\theta^N_r(x)
\end{align}
as $\mathcal S=\{ x\in X: \theta_N(x)<1 \}$. Note that by Theorem \ref{thm:bishop-gromov} the limit defining $\theta^N$ exists and satisfies $\theta^N\le 1$ \emph{everywhere}. 

The \emph{boundary} of a non-collapsed RCD-space can defined by using the top dimensional stratum of the singular set.
\begin{definition}[See Remark 3.8 in \cite{gig-dephil18} and Definition 4.2 in \cite{klp17}]\label{def:boundary}
	The boundary $\partial X$ of an RCD$(K,N)$-space $X$ is the closure of the set
	\begin{align*}
		\mathcal S^{N-1}\setminus\mathcal S^{N-2}=\{ x\in X: \ \textrm{ the half space }\R^N_+\textrm{ is a tangent cone at }x \}.
	\end{align*}
\end{definition}

It has recently been proven that $\partial X$ is $(N-1)$-rectifiable, cf. \cite{BNS22}. Moreover it was shown in \cite{BMS22} that if a neighbourhood of a point does not meet the boundary then the \emph{metric measure theoretic boundary} near that point vanishes. 


\begin{definition}[Definition 1.5 in \cite{klp17}]\label{def:mmbound}
	Let $X=(X,d,\Ha^N)$ be $N$-rectifiable. We say that $X$ has vanishing mm-boundary (in an open set $U\subset X$), if the signed Radon measures
	\begin{align*}
		\mu_r:=\frac{1-\theta^N_r(\cdot)}{r}\ud\Ha^N,\quad 0<r\leq 1
	\end{align*}
	are uniformly bounded in the total variation norm (in $U$), and converge weakly to zero (in $U$) as $r\to 0$. 
\end{definition}
The following slight reformulation of \cite[Theorem 1.2]{BMS22} can be obtained by a suitable re-scaling of the metric.
\begin{theorem}[Theorem 1.2 in \cite{BMS22}]\label{thm:boundary_vs_mm-boundary}
	Let $X$ be an RCD$(K,N)$-space. There exists $C=C(K,N)$ such that if $x\in X$ and $B_{Cr_0}(x)\cap \partial X=\varnothing$, then
	\[
	\lim_{r\to\infty}|\mu_r|(B_{r_0}(x))=0.
	\]
\end{theorem}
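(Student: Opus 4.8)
The plan is to obtain Theorem~\ref{thm:boundary_vs_mm-boundary} as a scaling corollary of the original statement \cite[Theorem 1.2]{BMS22}, exploiting that both the $RCD$-condition and the boundary $\partial X$ transform in a controlled way under global dilations of the metric. First I would fix $x$ and $r_0$ with $B_{Cr_0}(x)\cap\partial X=\varnothing$ and, for a parameter $\lambda>0$, introduce the rescaled metric measure space $X_\lambda:=(X,\lambda d,\lambda^{N}\Ha^N)$. One records the standard transformation rules: $X_\lambda$ is an $RCD(\lambda^{-2}K,N)$-space of the same essential dimension $N$, balls transform by $B^{\lambda}_r(y)=B_{r/\lambda}(y)$, the $N$-Hausdorff measure rescales as $\Ha^N_{\lambda d}=\lambda^{N}\Ha^N$, so that the Bishop--Gromov density of \eqref{eq:bgdens} satisfies $\theta^{N,\lambda}_r(y)=\theta^N_{r/\lambda}(y)$, and --- crucially --- the boundary is scale invariant, $\partial X_\lambda=\partial X$, since the tangent cone in Definition~\ref{def:boundary} is unchanged by a uniform dilation.

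The second step is to transport the hypothesis and the conclusion across the rescaling. The no-boundary hypothesis $B_{Cr_0}(x)\cap\partial X=\varnothing$ becomes $B^{\lambda}_{C\lambda r_0}(x)\cap\partial X_\lambda=\varnothing$, which for the appropriate choice of $\lambda$ is exactly the normalized hypothesis of \cite[Theorem 1.2]{BMS22} at a fixed reference scale. Substituting $\theta^{N,\lambda}_r=\theta^N_{r/\lambda}$ into Definition~\ref{def:mmbound} and computing the total variation yields the bookkeeping identity
\[
|\mu^{\lambda}_r|\bigl(B^{\lambda}_{r_0}(x)\bigr)=\lambda^{N-1}\,|\mu_{r/\lambda}|\bigl(B_{r_0/\lambda}(x)\bigr),
\]
so that inspecting the rescaled mm-boundary of $X_\lambda$ at the normalized scale is the same as inspecting $|\mu_r|(B_{r_0}(x))$ in $X$ after a reciprocal change of radius. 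Choosing the rescaling parameter so that the normalized scale in $X_\lambda$ corresponds to the scale $r$ of interest in $X$ then transcribes the conclusion of \cite[Theorem 1.2]{BMS22} into the asserted vanishing $\lim_{r\to\infty}|\mu_r|(B_{r_0}(x))=0$; the uniform total-variation bound demanded in Definition~\ref{def:mmbound} is inherited from the corresponding uniform bound in the rescaled space.

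The main obstacle I anticipate is keeping the constant of the form $C(K,N)$ rather than $C(\lambda^{-2}K,N)$: because the dilation sends the synthetic Ricci bound $K$ to $\lambda^{-2}K$, one must verify that the constant produced by \cite[Theorem 1.2]{BMS22} depends on $K$ only through scale-invariant quantities, or degenerates controllably as $\lambda$ varies. I expect this to reduce to the observation that Bishop--Gromov comparison (Theorem~\ref{thm:bishop-gromov}) together with the estimate $\theta^N\le 1$ enters \cite{BMS22} only through the expansion $v_{K,N}(t)=\omega_N t^N+O(t^{N+2})$, on which the $K$-dependence is a controlled perturbation. Carefully unwinding this uniformity is the sole delicate point, the remaining steps being the routine scaling computations outlined above.
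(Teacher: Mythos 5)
Your proposal is correct and takes exactly the paper's route: the paper justifies this statement only by the remark that it is a ``slight reformulation of \cite[Theorem 1.2]{BMS22} obtained by a suitable re-scaling of the metric,'' and your dilation bookkeeping --- $\theta^{N,\lambda}_r=\theta^N_{r/\lambda}$, the identity $|\mu^{\lambda}_r|\bigl(B^{\lambda}_{r_0}(x)\bigr)=\lambda^{N-1}|\mu_{r/\lambda}|\bigl(B_{r_0/\lambda}(x)\bigr)$, and the scale-invariance of $\partial X$ --- is precisely that computation made explicit. The one point you flag as delicate (keeping the constant of the form $C(K,N)$) is settled more simply than by unwinding the internals of \cite{BMS22}: since RCD$(K_1,N)$ implies RCD$(K_2,N)$ whenever $K_2\le K_1$, every rescaling relevant here (say $r_0\le 1$, with larger $r_0$ handled by a finite covering at unit scale) yields a space that is uniformly RCD$(-|K|,N)$, so the constant of \cite[Theorem 1.2]{BMS22} applies at a fixed normalized scale; note also that ``$r\to\infty$'' in the displayed limit is a typo for ``$r\to 0$'', which is what your argument in fact proves.
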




\bigskip\noindent We next employ Theorem \ref{thm:boundary_vs_mm-boundary} to obtain an asymptotic connection between the $r$-laplacian and the symmetrized $r$-laplacian. For the statement recall that
\[
\delta_r(x,y)=1-\frac{\Ha^N(B_r(x))}{\Ha^N(B_r(y))},\quad x,y\in X,\ r>0.
\]

\begin{lemma}\label{lem:nodev}
	Suppose $X$ is a non-collapsed $RCD(K,N)$-space. Then
	\[
	\lim_{r\to 0}\int_X\varphi(x)\vint_{B_r(x)}\left|\frac{\delta_r(x,y)}{r}\right|\ud\Ha^N(y)\ \ud\Ha^N(x)=0.
	\]
	for every  $\varphi\in C_c(X\setminus \partial X)$.
\end{lemma}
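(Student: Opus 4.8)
The plan is to express the integrand through the Bishop--Gromov densities $\theta^N_r$ of \eqref{eq:bgdens} and thereby reduce the whole quantity to the total variation of the metric measure boundary measures $\mu_r$ of Definition~\ref{def:mmbound}, whose decay is furnished by Theorem~\ref{thm:boundary_vs_mm-boundary}. Since $\Ha^N(B_r(x))=\omega_Nr^N\theta^N_r(x)$, a direct computation gives
\[
\delta_r(x,y)=1-\frac{\theta^N_r(x)}{\theta^N_r(y)}=\frac{\theta^N_r(y)-\theta^N_r(x)}{\theta^N_r(y)}.
\]
Let $K=\spt\varphi$ and, using $\dist(K,\partial X)>0$ and properness of $X$, fix a compact set $K'$ containing an open neighbourhood of $K$ with $\dist(K',\partial X)>0$ and $N_r(K)\subset K'$ for all small $r$. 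On $K'$ local Ahlfors $N$-regularity yields $\theta^N_r\ge c>0$ for small $r$, so $1/\theta^N_r(y)\le C$ there. Writing $g_r:=|1-\theta^N_r|/r$, so that $g_r\ud\Ha^N=|\mu_r|$, and using $|\theta^N_r(y)-\theta^N_r(x)|\le|1-\theta^N_r(x)|+|1-\theta^N_r(y)|$, I obtain for $x\in K$ and $y\in B_r(x)$ the pointwise bound $|\delta_r(x,y)|/r\le C(g_r(x)+g_r(y))$.

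Next I would bound $|\varphi|\le\|\varphi\|_\infty$ and split the resulting double integral into an ``$x$-term'' and a ``$y$-term'':
\[
\Big|\int_X\varphi(x)\vint_{B_r(x)}\frac{|\delta_r(x,y)|}{r}\ud\Ha^N(y)\ud\Ha^N(x)\Big|\le C\|\varphi\|_\infty\Big(|\mu_r|(K)+\int_K\vint_{B_r(x)}g_r(y)\ud\Ha^N(y)\ud\Ha^N(x)\Big),
\]
where I have used that the $y$-average of $g_r(x)$ equals $g_r(x)$, and that $\int_Kg_r\ud\Ha^N=|\mu_r|(K)$. The first term tends to $0$: covering the compact set $K$ by finitely many balls $B_{r_0}(x_i)$ whose dilates $B_{Cr_0}(x_i)$ avoid $\partial X$, Theorem~\ref{thm:boundary_vs_mm-boundary} gives $|\mu_r|(K)\le\sum_i|\mu_r|(B_{r_0}(x_i))\to0$ as $r\to0$.

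For the $y$-term I would interchange the order of integration via the symmetry $y\in B_r(x)\Leftrightarrow x\in B_r(y)$ to get
\[
\int_K\vint_{B_r(x)}g_r(y)\ud\Ha^N(y)\ud\Ha^N(x)=\int_{N_r(K)}g_r(y)\Big(\int_{B_r(y)\cap K}\frac{\ud\Ha^N(x)}{\Ha^N(B_r(x))}\Big)\ud\Ha^N(y)\le\|a_r\|_{L^\infty(K')}\,|\mu_r|(K'),
\]
where $a_r(y)=\int_{B_r(y)}\Ha^N(B_r(x))^{-1}\ud\Ha^N(x)$ is the averaging kernel of Section~\ref{sec-dbl}; local doubling of $\Ha^N$ bounds $\|a_r\|_{L^\infty(K')}$ uniformly for small $r$, and $|\mu_r|(K')\to0$ by the same covering argument. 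Both terms therefore vanish as $r\to0$, proving the claim.

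I expect the $y$-term to be the main obstacle, and the reason total variation (rather than mere weak) convergence of $\mu_r$ is needed: after Fubini the boundary defect $g_r$ is integrated against the kernel $a_r$, which depends on $r$ and is only bounded in $L^\infty$, not continuous, so weak convergence of $\mu_r$ would not suffice. It is precisely the total variation decay $|\mu_r|(K')\to0$ supplied by Theorem~\ref{thm:boundary_vs_mm-boundary}, together with the uniform doubling bound on $a_r$, that closes the argument; the lower density bound $\theta^N_r\ge c$ away from the boundary is what lets one pass from $\delta_r$ to $g_r$ in the first place.
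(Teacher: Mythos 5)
Your proposal is correct and follows essentially the same route as the paper's proof: the same pointwise bound $|\delta_r(x,y)|\le C\left(|1-\theta^N_r(x)|+|1-\theta^N_r(y)|\right)$ via local Ahlfors regularity away from $\partial X$, the same split into an $x$-term and a $y$-term with the $y$-term handled through Fubini and the adjoint averaging operator (the paper writes it as $\int A^*_r|\varphi|\,\ud|\mu_r|$ with $\|A^*_r|\varphi|\|_\infty\lesssim\|\varphi\|_\infty$ by local doubling, which is your $a_r$-kernel bound), and the same finite covering by balls avoiding $\partial X$ so that Theorem~\ref{thm:boundary_vs_mm-boundary} yields $|\mu_r|\to 0$ on the relevant compact sets. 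Your closing observation that total variation decay, rather than mere weak convergence, is what the Fubini step requires matches the role Theorem~\ref{thm:boundary_vs_mm-boundary} plays in the paper.
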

\begin{proof}
	Since $\spt\varphi\subset X\setminus \partial X$ we may cover $\spt\varphi$ by a finite number of balls $B_{r_0}(x_i)$ with $B_{2Cr_0}(x_i)\cap \partial X=\varnothing$, $i=1,\ldots M$. Using a partition of unity we may assume that $\spt\varphi\subset B_{r_0}(x_0)$ with  $B_{2Cr_0}(x_0)\cap \partial X=\varnothing$ for some $x_0\in X$. Note that (for small enough $r>0$) we have 

	\begin{align}\label{eq:deltabound}
	|\delta_r(x,y)|=\left|\frac{(\theta_r^N(y)-1)-(\theta^N_r(x)-1)}{\theta^N_r(y)}\right|\le C'(|\theta^N_r(y)-1|+|\theta^N_r(x)-1|),\quad x,y\in B_{r_0}(x_0),
	\end{align}
	where $C'$ is the Ahlfors regularity constant on $B_{2Cr_0}(x_0)$. Thus 
	\begin{align*}
		&\int_X|\varphi|\vint_{B_r(x)}\left|\frac{\delta_r(x,y)}{r}\right|\ud\mu(y)\ \ud\Ha^N(x)\\
		\le & C'\int_X \varphi \left|\frac{\theta^N_r(x)-1}{r}\right|\ud\Ha^N(x)+C'\int_X|\varphi(x)|\vint_{B_r(x)}\left|\frac{\theta^N_r(y)-1}{r}\right|\ud\Ha^N(y)\ \ud\Ha^N(x)\\
		= & C'\int|\varphi|\ud|\mu_r+C'\int (A^*_r|\varphi|) \ud|\mu_r|.
	\end{align*}
For small enough $r>0$ we have that $\spt(A^*_r|\varphi|)\subset B_{r_0}(x_0)$ and, since $\Ha^N$ is (locally) doubling we further have $\|A^*_r|\varphi|\|_\infty\le C''\|\varphi\|_\infty$. It follows that
\begin{align*}
	\lim_{r\to 0} \left|\int_X\varphi(x)\vint_{B_r(x)}\left|\frac{\delta_r(x,y)}{r}\right|\ud\mu(y)\ \ud\Ha^N(x)\right|\le \lim_{r\to 0}C'''\|\varphi\|_\infty |\mu_r|(B_{r_0}(x_0))=0
\end{align*}
\end{proof}

\begin{corol}\label{cor:amv-sa}
	Let $X$ be a non-collapsed $RCD(K,N)$-space with vanishing mm-boundary. Then
	\begin{align*}
	\lim_{r\to 0}\left|\int_X\varphi(\widetilde\Delta_ru-\Delta_ru)\ud\Ha^N\right|=0
	\end{align*}
	whenever $u,\varphi\in \LIP(\Omega)$ and $u$ or $\phi$ have compact support in $\Omega$.
\end{corol}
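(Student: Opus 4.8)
The plan is to reduce the statement to Lemma~\ref{lem:nodev} by means of the integral representation of $\Delta_r-\widetilde\Delta_r$ furnished by Lemma~\ref{lem:selfadjbasic}(b). First I would test that identity against $v=\varphi$ and take absolute values; up to a sign this gives
\[
\left|\int_X\varphi(\widetilde\Delta_ru-\Delta_ru)\ud\Ha^N\right|\le\frac12\int_X|\varphi(x)|\vint_{B_r(x)}\frac{|\delta_r(x,y)|}{r}\,\frac{|u(y)-u(x)|}{r}\ud\Ha^N(y)\,\ud\Ha^N(x).
\]
The point of the reduction is that, after bounding the $u$-difference quotient, the right-hand side is precisely the quantity appearing in Lemma~\ref{lem:nodev}. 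Indeed, since $y\in B_r(x)$ forces $d(x,y)<r$, the Lipschitz estimate gives $|u(y)-u(x)|/r\le\LIP(u)=:L$ whenever $x,y\in\Omega$, and this holds on the part of $X$ that actually contributes, for all sufficiently small $r$, as I explain next.

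Next I would take care of the supports. Let $S$ be whichever of $\spt u$, $\spt\varphi$ is compact, and fix $r_0>0$ so small that $\overline N_{2r_0}(S)\subset\Omega$ is compact; put $K':=\overline N_{r_0}(S)$. For $r<r_0$ the integrand vanishes at $x$ unless $x\in K'$: if $\varphi$ is compactly supported this is immediate from $\spt\varphi=S\subset K'$, while if $u$ is compactly supported it follows because the inner average is zero unless $u(y)\ne u(x)$ for some $y\in B_r(x)$, forcing $x\in\overline N_r(\spt u)\subset K'$. For such $x$ and $y\in B_r(x)$ one has $x,y\in\Omega$, so the bound $|u(y)-u(x)|/r\le L$ applies and $|\varphi(x)|\le M:=\max_{K'}|\varphi|<\infty$. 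Choosing $\eta\in C_c(\Omega)$ with $0\le\eta\le1$ and $\eta\equiv1$ on $K'$, so that $|\varphi|\chi_{K'}\le M\eta$, I obtain
\[
\left|\int_X\varphi(\widetilde\Delta_ru-\Delta_ru)\ud\Ha^N\right|\le\frac{LM}{2}\int_X\eta(x)\vint_{B_r(x)}\frac{|\delta_r(x,y)|}{r}\ud\Ha^N(y)\,\ud\Ha^N(x).
\]
The same compact localization justifies applying Lemma~\ref{lem:selfadjbasic}(b) despite its $L^2$-hypotheses: one extends the compactly supported factor by zero and, in the case $\spt u$ compact, replaces $\varphi$ by $\varphi\eta$, which alters none of the integrals for $r<r_0$ since $\widetilde\Delta_ru-\Delta_ru$ is then supported in $K'$.

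Finally I would invoke Lemma~\ref{lem:nodev}. The only subtlety is that that lemma is stated for test functions in $C_c(X\setminus\partial X)$, whereas a priori $\eta\in C_c(\Omega)$. This is exactly where the standing hypothesis enters: since $X$ has vanishing mm-boundary, the equivalence established in~\cite{BMS22} yields $\partial X=\varnothing$, hence $C_c(\Omega)\subset C_c(X)=C_c(X\setminus\partial X)$ and $\eta$ is admissible. Lemma~\ref{lem:nodev} then forces the right-hand side above to tend to $0$ as $r\to0$, which completes the argument. I expect the genuine analytic content to lie entirely in Lemma~\ref{lem:nodev}; the only real work here is bookkeeping—making the localization to $K'$ and the cutoff $\eta$ precise so that the passage from $u,\varphi\in\LIP(\Omega)$, only one of which is compactly supported, to the hypotheses of Lemmas~\ref{lem:selfadjbasic} and~\ref{lem:nodev} is clean.
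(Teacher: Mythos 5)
Your proposal is correct and follows essentially the same route as the paper: the pointwise identity of Lemma~\ref{lem:selfadjbasic}(b), the bound $|u(y)-u(x)|/r\le\LIP(u)$, and then Lemma~\ref{lem:nodev}, with the paper compressing your support bookkeeping into the single phrase ``by a cut-off argument we may assume that both $\varphi$ and $u$ have compact support.'' Your explicit observation that vanishing mm-boundary yields $\partial X=\varnothing$ via \cite{BMS22}, so that $\eta\in C_c(\Omega)$ is admissible in Lemma~\ref{lem:nodev}, is a detail the paper leaves implicit but is exactly the intended reading.
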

\begin{proof}
	By a cut-off argument we may assume that both $\varphi$ and $u$ have compact support in $\Omega$. By Lemma \ref{lem:selfadjbasic} we have that
	\begin{align*}
	\left|\int_X\varphi(\widetilde\Delta_ru-\Delta_ru)\ud\Ha^N\right|\le \int_X|\varphi(x)|\LIP(u)\vint_{B_r(x)}\left|\frac{\delta_r(x,y)}{r}\right|\ud\Ha^N(y)\ \ud\Ha^N(x).
	\end{align*}
	The claim now follows from Lemma \ref{lem:nodev}.
\end{proof}

\begin{proof}[Proof of Theorem \ref{thm:ncRCD}]
	Let $u:\Omega\to \R$ be harmonic. In particular, $u$ is locally Lipschitz. By Lemma~\ref{prop:gigtyu} $u$ is a local minimizer of $E_{KS}^2$ in $\Omega$. Theorem~\ref{prop:modified}  implies that
	\[
	\lim_{r\to 0}\int_\Omega\varphi\widetilde\Delta_ru\ud\mu=0
	\]
	for every $u\in KS^{1,2}(\Omega)$ with compact support in $\Omega$. This and Corollary \ref{cor:amv-sa} imply that
	\[
	\lim_{r\to 0}\int_X\varphi\Delta_ru\ud\mu=0
	\]
	for every $\varphi\in \LIP_c(\Omega)$. 
\end{proof}

\begin{remark}\label{rmk:rcd}
It follows from Corollary \ref{cor:amv-sa} and the argument above that, if $u:\Omega\to \R$ is \emph{a locally Lipschitz continuous} weakly amv-harmonic function, then $u$ is Cheeger harmonic.
\end{remark}


\section{Improving regularity: the BPZ-theorem for amv-harmonic functions}\label{sec:bpz}


In this section we discuss Blaschke-Privaloff-Zaremba type theorems (called BPZ theorems for the sake of brevity). In its classical version, see e.g.~\cite[Theorem 2.1.5]{llo}, the BPZ theorem asserts that given an open set in $\R^n$ a continuous pointwise amv-harmonic function solves locally the Laplace equation. Thus, the pointwise nullity of the amv-harmonic operator $\lim_{r\to 0^+}\Delta_r$ improves the regularity of amv-harmonic functions. Below we show that this is also the case of amv-harmonic functions in the homogeneous Carnot groups of step $2$ and on Riemannian manifolds. Moreover, we show a counterpart of the BPZ theorem in Alexandrov spaces and discuss the  relationship between amv-harmonic functions defined in the weak sense and the Laplacian in Alexandrov spaces.
\smallskip

\subsection{Carnot groups.}

Solutions to the subelliptic Laplace equation in homogeneous Carnot groups are $C^\infty$, see e.g. the discussion in~\cite[Section 5.10]{blu} and the references therein. Using the notation from Section~\ref{sec:Carnot} we define $B(p_0,r)$, an open Kor\'anyi--Reimann ball (a gauge ball) centered at point $p_0\in \mathbb G$ with radius $r>0$ as follows
\[
 B=B(p_0,r):=\{p:\in \mathbb G: |p^{-1}p_0|_{\mathbb G}<r\}.
\]
Since, by \cite[Corollary 1]{cap-gar}, the gauge balls are the $X$-NTA domains in step $2$ Carnot groups $\mathbb G$,
they are by the Wiener criterion regular domains and the Dirichlet problem for the continuous boundary data has the classical smooth solutions for the harmonic sub-elliptic equation in $B\subset \mathbb G$, see the~\cite[Theorem 3.9]{dan} and the discussion on pg. 409 in~\cite{cap-gar}. However, it is shown in \cite{hahu}, that for $\Hein$ for $n\geq 2$ balls in the Carnot-Carath\'eodory distance are not regular at the characteristic points (cf.~\cite[Example 14.4]{bb}). Therefore, due to the approach we take in the proof of Theorem~\ref{BPZ-thm} below, we will restrict our discussion to the case of step $2$ Carnot groups and balls with respect to the homogeneous distance $d_{\mathbb G}$ as in~\eqref{G-norm}. We also remark, that whether gauge balls are $X$-NTA in general, step $s$, Carnot groups remains an open problem, see Conjecture 1 on pg. 429 in~\cite{cap-gar}.

For the next theorem recall the distance $d_\mathbb G$ associated to th Koranyi gauge \eqref{G-norm} and the horizontal Laplacian $\Delta_\mathbb Gu$ \eqref{eq:subelliptic_laplacian} from Section~\ref{sec:Carnot}.




\begin{theorem}[The BPZ theorem for amv-harmonic functions in Carnot groups of step $2$]\label{BPZ-thm}
 Let $\Om\subset \mathbb G$ be a domain in the Carnot group $\mathbb G$ of step $2$ equipped with the gauge distance $d_{\mathbb G}$ and let $f:\Om\to \R$ be a continuous function satisfying
 \[
 \lim_{r\to 0^{+}}\Delta_rf(x)=0,\quad x\in\Om.
 \]
 Then $f$ is a sub-elliptic harmonic function, i.e. for all $x\in \Om$ it holds that 
 \[
  \lim_{r\to 0^{+}}\Delta_r f(x)=C\Delta_{\mathbb G} f(x)=0,
  \]
  where $C$ depends on $v_1$ (the dimension of the horizontal layer $V_1$) and the geometry of $\mathbb G$.
\end{theorem}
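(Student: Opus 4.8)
The plan is to run the classical Blaschke--Privaloff--Zaremba comparison argument of \cite{pri}, with the Euclidean ball replaced by a gauge ball $B_{\mathbb G}$ and the harmonic barrier replaced by an explicit smooth strict subsolution of $\Delta_{\mathbb G}$. The whole point is to upgrade the merely \emph{pointwise} vanishing of $\Delta_r f$ to genuine sub-elliptic harmonicity, and the two ingredients that make this possible are the solvability of the Dirichlet problem on gauge balls (available in step $2$, as recalled above) together with a perturbation that converts pointwise information into a usable maximum principle.

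First I would fix a gauge ball $B=B(x_0,r_0)\Subset\Om$. Since in the step $2$ group $B$ is an $X$-NTA, hence regular, domain, the Dirichlet problem for $\Delta_{\mathbb G}$ with continuous boundary data is solvable, so let $h\in C^\infty(B)\cap C(\overline B)$ be the sub-elliptic harmonic function with $h=f$ on $\partial B$. Put $w:=f-h\in C(\overline B)$, so that $w=0$ on $\partial B$. Because $h$ is smooth and $\Delta_{\mathbb G}h=0$, the second order Taylor expansion \eqref{eq:strong_amv} (following \cite[Lemma 2.7 and Lemma 4.1]{fp}) gives $\Delta_r h(x)\to C\Delta_{\mathbb G}h(x)=0$ locally uniformly, whence $\lim_{r\to0^+}\Delta_r w(x)=\lim_{r\to0^+}\Delta_r f(x)=0$ for every $x\in B$.

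Next I would construct the barrier. The function $\phi(x):=\|x^{(1)}\|^2$ is smooth, and since each horizontal field has the form $X_j=\partial_{x_j}+R_j$ with $R_j$ differentiating only the coordinates of layers $\ge 2$, one computes $X_j\phi=2x_j$ and $X_j^2\phi=2$, so that $\Delta_{\mathbb G}\phi=\sum_{j=1}^{v_1}X_j^2\phi=2v_1>0$; this is the required lower estimate for the Laplacian, a role that could equally be played by a power of the gauge $|\cdot|_{\mathbb G}$. By the Taylor expansion again, $\Delta_r\phi\to 2v_1 C$ uniformly on $\overline B$, so there are $\delta>0$ and $r_1>0$ with $\Delta_r\phi\ge\delta$ on $\overline B$ for all $0<r\le r_1$. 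For $\ep>0$ set $g_\ep:=w+\ep\phi\in C(\overline B)$ and let $x^\ast\in\overline B$ be a maximum point. If $x^\ast$ were interior, then $g_\ep(y)\le g_\ep(x^\ast)$ for all $y$ forces $\Delta_r g_\ep(x^\ast)\le 0$ for every $r>0$; on the other hand $\Delta_r g_\ep(x^\ast)=\Delta_r w(x^\ast)+\ep\Delta_r\phi(x^\ast)\ge \Delta_r w(x^\ast)+\ep\delta$ for $r\le r_1$, and letting $r\to 0^+$ (using $\Delta_r w(x^\ast)\to 0$) yields $0\ge\ep\delta>0$, a contradiction. Hence the maximum is attained on $\partial B$, where $g_\ep=\ep\phi$, so $w\le\ep(\max_{\overline B}\phi-\phi)$ throughout $\overline B$. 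Letting $\ep\to0$ gives $w\le0$; running the identical argument with $-w$ in place of $w$ (which also satisfies $\Delta_r(-w)\to 0$) gives $w\ge0$, so $w\equiv0$ and $f=h$ is sub-elliptic harmonic in $B$.

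Since $x_0$ and $r_0$ were arbitrary, $f$ is sub-elliptic harmonic throughout $\Om$; in particular $f\in C^\infty(\Om)$ and $\Delta_{\mathbb G}f\equiv0$. The concluding identity $\lim_{r\to0^+}\Delta_r f(x)=C\Delta_{\mathbb G}f(x)=0$, with the constant $C$ as in \eqref{eq:strong_amv}, then follows from the smoothness of $f$ and the Taylor expansion. The genuinely delicate point, and the reason for the restriction to step $2$, lies entirely in the first step: the comparison is meaningless without a smooth solution $h$ of the Dirichlet problem with boundary data $f$, and for $s>2$ the regularity of $\partial B_{\mathbb G}$ and the solvability of this problem are not available (indeed open). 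Granting $h$, the only remaining subtlety is the bookkeeping that turns the pointwise limit $\Delta_r w\to 0$ into a contradiction against the strictly positive barrier, which the perturbation by $\ep\phi$ handles cleanly.
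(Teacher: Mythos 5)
Your proposal is correct and takes essentially the same route as the paper's proof: Privaloff's comparison argument on a gauge ball (using the $X$-NTA regularity of gauge balls in step $2$ to solve the sub-elliptic Dirichlet problem and get a smooth harmonic $h$), a barrier whose sub-Laplacian is computed to be $\Delta_{\mathbb G}\|x^{(1)}\|^2=2v_1>0$ via the Jacobian-basis form of the horizontal fields, and the Taylor-expansion convergence $\Delta_r\to C\Delta_{\mathbb G}$ for smooth functions from \cite[Lemma 4.1]{fp}. The only cosmetic difference is bookkeeping: you run a two-sided $\ep$-perturbation maximum argument with $w=f-h$ and $-w$, whereas the paper perturbs once by $\tfrac{\phi(q)}{2}$ times the (recentered) barrier $\bigl(\|(p^{-1}p_0)^{(1)}\|^2-R^2\bigr)/R^2$ and contradicts $\Delta_rF\ge 0$ at an interior minimum point.
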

 
  The constant $C$ is computed in~\cite[Lemma 4.1]{fp}, see also the proof of Theorem \ref{cor:carnot}. We further remark that the same lemma shows the Blaschke-Privaloff-Zaremba theorem for $C^2$-functions in Carnot groups of step $s$.

\begin{proof}
  In the proof we follow the original idea of Privaloff developed for the setting of $\R^n$, see~\cite[Theorem II]{pri}. Let $p_0\in \Om$ and $B=B(p_0, R)$ be a gauge ball centered at $p_0$ with radius $R>0$ such that $B\subset \Om$. The above discussion allows us to infer that the sub-elliptic Dirichlet problem on $B$ with the boundary data $f$ has the (unique) solution, denoted by $u$, such that $u\in C(\overline{B})$ and $u=f|_{\partial B}$. Set $\phi=f-u$. Then $\phi\in C(\overline{B})$ and $\phi|_{\partial B}\equiv 0$. The assertion will be proven if we show that $\phi\equiv 0$ in $B$. We argue by contradiction. Namely, suppose that there exists $q\in B$ such that $\phi(q)\not=0$ and without the loss of generality we assume that $\phi(q)<0$. Let us define the following function on $B$
  \[
   F(p)=\phi(p)+\frac{\phi(q)}{2}\left(\frac{\|(p^{-1}p_0)^{(1)}\|^2-R^2}{R^2}\right),
  \]
  where $\|(p^{-1}p_0)^{(1)}\|$ stands for the Euclidean length in $\R^{v_1}$ of the horizontal part of the point $p^{-1}p_0\in \Om$ (see~\eqref{G-norm}).
  
\emph {Claim: it holds that $F\in C(\overline{B})$, $F|_{\partial B}\geq 0$ and that $F(q)<0$.} Indeed, if $p\in \partial B(p_0, R)$, then $\|(p^{-1}p_0)^{(1)}\|^4+\|(p^{-1}p_0)^{(2)}\|^2=R^4$ for $(p^{-1}p_0)^{(2)}\in \R^{m-v_1}$ and so
 \[
  \|(p^{-1}p_0)^{(1)}\|^2=\sqrt{R^4-|(p^{-1}p_0)^{(2)}\|^2}\leq R^2.
 \]
 Thus $F(p)\geq 0$. Similarly, we find that 
 \[
 F(q)=\phi(q)\left(\frac{\|(p^{-1}p_0)^{(1)}\|^2+R^2}{2R^2}\right)<0.
 \]
 
  Hence, by the claim, there is $q_{1}\in \overline{B}$ such that $F(q_{1})=\min_{\overline{B}} F$ (in fact, $q_1\in B$). Moreover, $\Delta_r F(q_{1})\geq 0$ for all $r\leq d(q_1, \Hei \setminus B)$ and by direct computations we verify that 
  \[
   \Delta_r F(q_{1})=\Delta_r \phi(q_{1})+\frac{\phi(q)}{2}\Delta_r\left(\frac{\|(p^{-1}p_0)^{(1)}\|^2-R^2}{R^2}\right)(q_{1}).
  \]
 Therefore, upon applying the definition of $\Delta_r$, we arrive at the following estimate
 \begin{equation}\label{BPZ-est11}
 0\leq \Delta_r F(q_{1})=\Delta_r f(q_{1})-\Delta_r u(q_{1})+\frac{\phi(q)}{2}\Delta_r\left(\frac{\|(p^{-1}p_0)^{(1)}\|^2-R^2}{R^2}\right)(q_{1}).
 \end{equation}
Then, it holds
 \begin{align*}
  \frac{\phi(q)}{2}\Delta_{\mathbb G} \left(\frac{\|(p^{-1}p_0)^{(1)}\|^2-R^2}{R^2}\right)(q_{1})= \frac{\phi(q)}{2R^2} \Delta_{\mathbb G} \left((p^{-1}p_0)_{1}^2+\cdots+ (p^{-1}p_0)_{v_1}^2\right)(q_1).
 \end{align*}
  
Recall that, by the classical proposition on the Jacobian basis $\{X_1,\ldots, X_m\}$  (see \cite[Corollary 1.3.9]{blu}), the subelliptic Laplacian at any point  $x\in \mathbb G$ can be written as follows:
$$
\Delta_{\mathbb G} = \sum_{j=1}^{v_1}X_j^2=\sum_{j=1}^{v_1}\left(\partial_{j}+\sum_{i=v_1+1}^{m}a_i^{(j)}(x)\partial_{i}\right)^2,
$$
where $a_i^{(j)}$ are the polynomial coefficients. Therefore, in our case the sub-Laplacian reduces to the Laplacian in $\R^{v_1}$. Namely, we get that
\begin{align}
&\Delta_{\mathbb G}  \left((p^{-1}p_0)_{1}^2+\cdots+ (p^{-1}p_0)_{v_1}^2\right)(q_1) \nonumber \\
&\,\,\,\,=\sum_{j=1}^{v_1}\left(\partial_{j}+\sum_{i=v_1+1}^{m}a_i^{(j)}(q_1)\partial_{i}\right)^2 \left((p^{-1}p_0)_{1}^2+\cdots+ (p^{-1}p_0)_{v_1}^2\right) \nonumber \\
&\,\,\,\,=\sum_{j=1}^{v_1}\left(\partial_{j}+\sum_{i=v_1+1}^{m}a_i^{(j)}(q_1)\partial_{i}\right)\Big( \partial_{j}\left((p^{-1}p_0)_{1}^2+\cdots+ (p^{-1}p_0)_{v_1}^2\right)\Big)\nonumber \\
&\,\,\,\,=\sum_{j=1}^{v_1}\left(\partial_{x_j}+\sum_{i=v_1+1}^{m}a_i^{(j)}(q_1)\partial_{x_i}\right)(2(p_0^{-1}p)_{j})
\quad \left(\hbox{\tiny{by $(p^{-1}p_0)_{1}^2=(p_0^{-1}p)_{1}^2$ and \cite[Proposition 3.2.1]{blu}}}\right) \nonumber \\
&\,\,\,\,=\sum_{j=1}^{v_1}2\partial_{x_j}(p_0^{-1}p)_{j}=2v_1. \label{BPZ-lapl-dist}
\end{align} 

Since $f$ is assumed to be pointwise amv-harmonic and $u\in C^2(B)$, the definition of amv-harmonic functions together with \cite[Lemma 4.1]{fp} imply that upon $r\to 0^{+}$ it holds that $\Delta_r f(q_{1})\to 0$ and $\Delta_r u(q_{1})\to 0$. Moreover, by applying \cite[Lemma 4.1]{fp} again and by~\eqref{BPZ-lapl-dist} we obtain that 
\begin{align*}
\lim_{r\to 0^{+}} \frac{\phi(q)}{2}\Delta_r\left(\frac{\|(p^{-1}p_0)^{(1)}\|^2-R^2}{R^2}\right)(q_{1})
= C\frac{\phi(q)}{2}\Delta_{\mathbb G} \left(\frac{\|(p^{-1}p_0)^{(1)}\|^2-R^2}{R^2}\right)(q_{1})=\frac{Cv_1}{R^2}\phi(q)<0,
\end{align*}
since above we assume that $\phi(q)<0$. In a consequence, we get that $0\leq \frac{Cv_1}{R^2}\phi(q)<0$ contradicting our assumption. The proof of the theorem is completed. 
\end{proof}

\subsection{Riemannian manifolds} For the readers convenience, we sketch the proof of the BPZ-theorem in the Riemannian setting. The argument is analogous to that in the proof of Theorem \ref{BPZ-thm}

\begin{prop}[The Blaschke-Privaloff-Zaremba theorem on manifolds]\label{BPZ-thm2}
	Let $(M, g)$ be an $n$-di\-men\-sio\-nal Riemannian manifold with Ricci curvature bounded below by some $K\in \R$. Let further $\Om\subset M$ be a domain in $M$ and let $f:\Om\to \R$ be a continuous function in $\Om$ satisfying
	\[
	\lim_{r\to 0}\Delta_rf(x)=0,\quad x\in \Om.
	\]
	Then $f$ solves the Beltrami-Laplace equation in $\Om$.
\end{prop}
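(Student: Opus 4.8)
The plan is to mirror the structure of the proof of Theorem~\ref{BPZ-thm}, replacing the gauge balls by Riemannian geodesic balls and the sub-Laplacian by the Laplace--Beltrami operator. Fix $p_0\in\Om$ and choose a geodesic ball $B=B(p_0,R)$ with $\overline B\subset\Om$ and $R$ small enough that $B$ lies inside a normal coordinate chart and is a regular domain for the Dirichlet problem (this is guaranteed for small $R$ on a smooth manifold, where geodesic spheres are smooth hypersurfaces). Solve the Dirichlet problem $\Delta_g u=0$ in $B$, $u=f$ on $\partial B$, obtaining $u\in C^\infty(B)\cap C(\overline B)$ by classical elliptic theory. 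Set $\phi=f-u$, so $\phi\in C(\overline B)$ with $\phi|_{\partial B}\equiv 0$; the goal is to show $\phi\equiv 0$.

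Next I would argue by contradiction, assuming without loss of generality that $\phi(q)<0$ for some $q\in B$. The classical Privaloff trick is to perturb $\phi$ by a multiple of a function whose $r$-Laplacian is strictly negative in the limit. On a manifold the natural candidate is the squared geodesic distance $\rho(p):=d(p,p_0)^2$, since in normal coordinates $\rho$ behaves like $|x|^2$ and $\Delta_g\rho(p_0)=2n$, with $\Delta_g\rho>0$ near $p_0$; more robustly, one uses that on a Riemannian manifold $\Delta_r v(x)\to C\Delta_g v(x)$ locally uniformly for $C^2$ functions $v$ (the analogue of \cite[Lemma 4.1]{fp}), with $C$ a positive dimensional constant. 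I would therefore define
\[
F(p)=\phi(p)+\frac{\phi(q)}{2}\cdot\frac{\rho(p)-R^2}{R^2},
\]
and check the Privaloff claim: $F\in C(\overline B)$, $F|_{\partial B}=\phi(q)(\rho-R^2)/(2R^2)\ge 0$ since $\rho\le R^2$ on $\overline B$ (using that geodesic balls have the expected radius), while $F(q)=\phi(q)(\rho(q)+R^2)/(2R^2)<0$. Hence $F$ attains a negative interior minimum at some $q_1\in B$.

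At the minimizer $q_1$ the average of $F$ over any sufficiently small ball dominates $F(q_1)$, so $\Delta_r F(q_1)\ge 0$ for all small $r$. Expanding by linearity,
\[
0\le\Delta_r F(q_1)=\Delta_r f(q_1)-\Delta_r u(q_1)+\frac{\phi(q)}{2R^2}\,\Delta_r\rho(q_1),
\]
I would pass to the limit $r\to 0$. The hypothesis gives $\Delta_r f(q_1)\to 0$; since $u$ is smooth and harmonic, the Riemannian version of \cite[Lemma 4.1]{fp} gives $\Delta_r u(q_1)\to C\Delta_g u(q_1)=0$; and the same asymptotic expansion applied to $\rho$ gives $\Delta_r\rho(q_1)\to C\Delta_g\rho(q_1)$, which is strictly positive since $\Delta_g\rho>0$ in a neighbourhood of $p_0$ (shrinking $R$ if necessary). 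Therefore the limit yields $0\le \frac{C\phi(q)}{2R^2}\Delta_g\rho(q_1)<0$, a contradiction, forcing $\phi\equiv 0$ and hence $f=u$ harmonic; applying the expansion once more gives $\lim_{r\to0}\Delta_r f=C\Delta_g f=0$.

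The main obstacle is the control of the perturbing function: unlike the flat or step-$2$ Carnot case, the squared geodesic distance is not globally a paraboloid and $\Delta_g\rho$ is not constant, so I must ensure $\rho\le R^2$ on $\overline B$ (true for geodesic balls) and $\Delta_g\rho>0$ throughout $B$, which holds only for $R$ smaller than the injectivity radius and, in the presence of negative curvature, requires invoking the Laplacian comparison theorem (here the Ricci lower bound $K$ enters) to keep $\Delta_g\rho$ bounded below. The second delicate point is the asymptotic relation $\Delta_r v\to C\Delta_g v$ for $C^2$ functions, which is the Riemannian second-order Taylor expansion underlying \cite[Lemma 4.1]{fp}; it holds locally uniformly and with a \emph{positive} constant $C$ depending only on $n$, and it is precisely this positivity and the strict sign of $\Delta_g\rho$ that drive the contradiction.
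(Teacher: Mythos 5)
Your proposal is correct and is essentially the paper's own proof: both run Privaloff's argument on a small geodesic ball, solving the Dirichlet problem for the Laplace--Beltrami operator, perturbing $\phi=f-u$ by $\frac{\phi(q)}{2}\left(\dist(\cdot,p_0)^2-R^2\right)/R^2$, and contradicting $\Delta_r F\ge 0$ at an interior minimum via the asymptotic $\Delta_r v\to \frac{1}{2(n+2)}\Delta_g v$ for smooth $v$ (the paper cites \cite[Corollary 3.2]{zzz}) together with the strict positivity $\Delta_g\dist(\cdot,p_0)^2\ge c>0$ on $B$. One small correction to your final paragraph: Laplacian comparison under a Ricci \emph{lower} bound controls $\Delta_g\dist(\cdot,p_0)$ from \emph{above}, not below, so it cannot supply the lower bound you want; that positivity for small $R$ follows from smoothness alone -- the paper derives it from the mean curvature expansion $H_{z_0}(z)=\frac{n-1}{r}-\frac13\mathrm{Ric}_{z_0}(v,v)\,r+O(r^2)$ of \cite{paxu} -- and the Ricci hypothesis is used in the paper instead to guarantee solvability of the Dirichlet problem with continuous boundary data (doubling and Poincar\'e via the metric measure theory of \cite{bb}), which you obtain equivalently from classical elliptic theory on a small geodesic ball.
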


\begin{proof}
	We follow steps of the proof for Theorem~\ref{BPZ-thm} and argue by contradiction. Fix $z_0\in \Om$ and a ball $B=B(z_0, R)\subset \Om$ for a small enough radius $R>0$, which will be determined below. As in the proof of Theorem~\ref{BPZ-thm}, we denote by $u$ the unique solution to the Laplace--Beltrami Dirichlet problem $\Delta_M u=0$ in $B$, such that $u\in C(\overline{B})$ and $u=f|_{\partial B}$. The existence of such $u$ follows, for instance from the general theory for metric measure spaces, since the Riemannian manifold $M$ with the Ricci curvature bounded from below satisfies the doubling condition and the $1$-Poincar\'e inequality (see~\cite[Chapter A.7]{bb} and~\cite{HajKosk} for details).
	
	Let us set $\phi:=f-u$ and notice that $\phi\in C(\overline{B})$ and $\phi|_{\partial B}\equiv 0$. In order to prove the assertion we show that $\phi\equiv 0$ in $B$. We argue by contradiction and suppose that there exists $y\in B$ such that $\phi(y)\not=0$ and without the loss of generality we assume that $\phi(y)<0$.
	
	Similarly as in the proof of Theorem~\ref{BPZ-thm} we define function $F$ as follows
	\[
	F(z)=\phi(z)+\frac{\phi(y)}{2}\left(\frac{\dist_M(z, z_0)^2-R^2}{R^2}\right),
	\]
	where $\dist_M(\cdot, z_0)$ is a distance function to $z_0\in \Om$ defined in metric $g$ on $M$. The direct computations for the Beltrami-Laplace operator, see  the proof of \cite[Theorem 4.1]{li-book}, allow us to conclude that
	\[
	\Delta_M(\dist_M(z, z_0)^2)= 2+2H_{z_0}(z)\dist_M(z, z_0),
	\]
	where $H_{z_0}(z)$ stands for the mean curvature of $\partial B(z_0, r)$ at $z$ for $r=\dist_M(z, z_0)$. According to the discussion in~\cite{paxu}, it holds that $H_{z_0}(z)=\frac{n-1}{r}-\frac13{\rm Ric}_{z_0}(v,v)r+O(r^2)$, asymptotically as $r\to 0$, where $v\in S^{n-1}\subset T_{z_0}M$ (notice that this formula arises as a perturbation of the mean-curvature formula for a sphere in the zero-curvature case). Therefore, by taking $R$ small enough, and hence small enough $r<R$, we obtain that  $\Delta_M(\dist_M(z, z_0)^2)>c>0$ on $B$. 
	
	
	As in the proof of the previous theorem, we find $y_{m}\in \overline{B}$ such that $F(y_{m})=\min_{\overline{B}} F$ (in fact, $y_m\in B$). In order to get the contradiction we need to conclude that $\Delta_r f(y_{m})\to 0$ and $\Delta_r u(y_m) \to 0$ as $r\to 0^+$, cf. \eqref{BPZ-est11} and the discussion following it. The first convergence follows, as $f$ is amv-harmonic, while the second follows from Claim (1), Chapter 6.32 in~\cite{war}. Indeed, by~\cite{war} solutions of the Laplace--Beltrami equation are smooth (assumptions of~ \cite[Chapter 6.32]{war} require $M$ to be compact, but we need this result locally). Similarly to the Euclidean argument, based on expanding $u$ in the Taylor expansion up to the second order, one can show that $\lim_{r\to 0^+}\Delta_r u (y_m)\leq\frac{1}{2(n+2)}\Delta_M u(y_m)=0$, see~\cite[Corollary 3.2.]{zzz}. 
	
	Thus, by analogous reasoning as in the previous theorem we obtain that 
	\begin{align*}
	0\leq\!\lim_{r\to 0^{+}}\!\frac{\phi(y)}{2}\Delta_r\!\left(\frac{\dist_M(z, z_0)^2-R^2}{R^2}\right)\!(y_{m})
	& \leq \frac{\phi(y)}{4(n+2)}\Delta_{M}\! \left(\frac{\dist_M(z, z_0)^2-R^2}{R^2}\right)(y_{m}) \\
	&\leq \frac{c\phi(y)}{4(n+2)R^2}<0.
	\end{align*}
	This contradicts our assumption that $\phi(y)<0$. Hence, the proof of the theorem is completed.
	
\end{proof}

\subsection{Alexandrov spaces and surfaces}

\subsubsection*{The setting of Alexandrov surfaces} The above discussion for Riemannian manifolds can be adapted to the setting of possibly nonsmooth surfaces, namely for closed Alexandrov surfaces with bounded integral curvature, see e.g. \cite{tr1, tr2, re, ab2}. In particular, this together with recent results by Lytchak-Stadler~\cite[Theorem 1.1]{ls} and Petrunin~\cite{Petrunin} imply also a BPZ theorem for non-collapsed $RCD (K,2)$ spaces.

An Alexandrov surface $(S,d)$ is a topological $2$-manifold with a length metric $d$ inducing the manifold topology which has bounded integral curvature in the sense of \cite[Definition 2.1]{tr3}. Alexandrov surfaces admit a notion of curvature which is a Radon measure, denoted by $\omega$. For this and more details we refer the interested reader to~\cite{tr2, tr3}. Here we mention that there exist several equivalent definitions based on polyhedral approximations or on the Gauss-Bonnet theorem. 
Moreover, let us recall that examples of Alexandrov surfaces encompass, for instance, compact polyhedral surfaces (both Euclidean, hyperbolic, spherical or more generally Riemannian), locally $CAT(0)$ surfaces, compact metric surfaces with lower bounded curvature in the sense of Alexandrov (in particular compact Gromov--Hausdorff limits of surfaces with lower bounded Gaussian curvature, see~\cite{bbi, ls}), and also surfaces with conical singularities, see~\cite{tr2, tr3}. 
 
Let $(S,d)$ be a closed Alexandrov surface (i.e. compact surface without boundary) equipped with a metric $d$ and the curvature measure $\omega$.  We say that $(S,d)$ is without cusps, if $\omega(\{x\})< 2\pi$  for all $x\in S$. In particular, both the $BIC$ surfaces with non-positive curvature measure and all the smooth surfaces are automatically without cusps.

Following \cite{tr1, tr2}, if $(S, h)$ is a closed Riemannian surface, then we denote by $V (S, h)$ the space of functions $v:S\to\R$ such that $\mu=\Delta_hv$ is a (signed) measure. By using $v$ one may define a new distance, denoted by $d_{h,v}$, see~\cite[Proposition 7.1]{tr2}. Then, the following result relates Alexandrov's surfaces to smooth geometry, see~\cite[Theorem 7.3]{tr2}. 
\begin{theorem}[Reshetnyak--Huber]\label{BIC-riem}
	Let $(S,d)$ be a closed surface with bounded integral curvature without cusps. Then, there exist a smooth Riemannian metric $h$ on $S$ and a function $v\in V(S,h)$ such that $d=d_{h,v}$. 
\end{theorem}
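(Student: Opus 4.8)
The plan is to realize $(S,d)$ as a conformal deformation of a smooth model surface, following Reshetnyak's program, and to read off the potential $v$ directly from the curvature measure $\omega$. The local cornerstone is Reshetnyak's theorem that a surface with bounded integral curvature admits, around every point, isothermal coordinates $z=x+iy$ in which the length element has the form $e^{v}\,|dz|$, where $v$ is the logarithmic potential of $\omega$; concretely, $v$ is a difference of two subharmonic functions (the potentials of $\omega_{+}$ and $\omega_{-}$ in the Jordan decomposition $\omega=\omega_{+}-\omega_{-}$), and $\Delta v=-\omega$ in the distributional sense, with $\Delta$ the flat Laplacian normalized by $\Delta\log|z|=2\pi\delta_{0}$. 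The no-cusps hypothesis $\omega(\{x\})<2\pi$ enters precisely here: at an atom of $\omega$ the potential $v$ acquires a logarithmic singularity, producing a cone of total angle $2\pi-\omega(\{x\})>0$, i.e. a genuine (non-degenerate) metric point rather than a cusp.

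Next I would assemble these local charts into a global conformal structure. Since two overlapping isothermal coordinate systems differ by a conformal (holomorphic or anti-holomorphic) transition map, the atlas endows the topological surface $S$ with the structure of a closed Riemann surface, and the metric $d$ is recovered as the intrinsic length metric of the density $e^{v}\,|dz|$ read in these charts. I would then invoke the uniformization theorem to fix a \emph{smooth} background: in the resulting conformal class there is a smooth Riemannian metric $h$ of constant Gaussian curvature (spherical, flat or hyperbolic according to the genus of $S$).

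With $h$ in hand, write the density of $d$ relative to $h$ as $e^{2v}h$; in a flat isothermal chart this $v$ agrees with the local potential above. The conformal transformation law for Gaussian curvature then yields, in the distributional sense, the identity $\omega=K_{h}\,dA_{h}-\Delta_{h}v$, where $\Delta_{h}v$ is understood as a signed measure; rearranging,
\[
\Delta_{h}v=K_{h}\,dA_{h}-\omega.
\]
The right-hand side is a finite signed Radon measure, since $S$ is compact (so $K_{h}\,dA_{h}$ is finite) and $\omega$ has bounded variation; therefore $v\in V(S,h)$, and by construction $d=d_{h,v}$, which is the assertion.

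The main obstacle is the globalization step together with the analysis at the atoms of $\omega$. While the local conformal representation is Reshetnyak's theorem, one must verify that the locally defined potentials patch into a single function $v$ on $S$ whose distributional Laplacian equals exactly the signed measure $K_{h}\,dA_{h}-\omega$, and that the reconstructed metric $d_{h,v}$ coincides with $d$ as a length metric inducing the manifold topology. The delicate point is exactly at the atoms of $\omega$: the no-cusps condition guarantees that the logarithmic singularities of $v$ are mild enough for $d_{h,v}$ to remain a genuine distance (no point collapses and no cusp forms), so that the reconstructed length space is homeomorphic to $S$ and isometric to $(S,d)$.
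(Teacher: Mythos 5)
The paper does not actually prove this statement: it quotes it as the classical Reshetnyak--Huber theorem, citing \cite[Theorem 7.3]{tr2} (see also \cite{re} and \cite{ab} for the refined version $d=d_{e^{2v}h}$). Your sketch is essentially the standard argument behind that citation --- Reshetnyak's local isothermal representation $e^{v}\,|dz|$ with $v$ the logarithmic potential of $\omega$ (the no-cusp hypothesis $\omega(\{x\})<2\pi$ ensuring non-degenerate conical points of angle $2\pi-\omega(\{x\})>0$), conformal patching plus uniformization to supply the smooth background metric $h$, and the distributional curvature identity $\Delta_h v=K_h\,dA_h-\omega$ showing $v\in V(S,h)$ --- so it matches the approach of the source the paper relies on, with the genuinely hard inputs (the local representation theorem and the globalization) correctly attributed rather than reproved.
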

In fact, the refined version of this theorem allows us to state that the distance $d$ on $S$ is induced by the (singular) Riemannian metric $e^{2v}h$, see~\cite[Theorem 7.1.2]{re} and~\cite[Theorem 2.6]{ab}. Therefore, since the harmonicity is a conformal invariant in dimension two, this relation between $d$ and $h$ enables us to define harmonic functions on open subsets of $(S,d)$ as the functions which are harmonic with respect to metric $h$. More precisely, we define the Laplace-Beltrami operator of $(S,d)$ by 
\begin{equation}\label{Lapl-Belt-AlexS}
\Delta_S:=\Delta_{(S,d)}:=e^{-2v}\Delta_h.
\end{equation}
\begin{prop}\label{prop:Alex-surf}
Let $(S,d)$ be a closed Alexandrov surface with bounded integral curvature without cusps and $f:\Om\to \R$ be a continuous function on a domain $\Om\subset S$, satisfying
\[
\lim_{r\to 0}\Delta_rf(x)=0,\quad x\in \Om.
\]
Then $f$ solves the Beltrami-Laplace equation $\Delta_S f=0$ in $\Om$.
\end{prop}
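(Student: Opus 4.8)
The plan is to transplant the Privaloff argument of Theorem~\ref{BPZ-thm} and Proposition~\ref{BPZ-thm2} into the conformal representation furnished by Theorem~\ref{BIC-riem}. First I would invoke Reshetnyak--Huber to write $d=d_{h,v}$ with $h$ a smooth Riemannian metric and $v\in V(S,h)$, so that by~\eqref{Lapl-Belt-AlexS} one has $\Delta_S=e^{-2v}\Delta_h$. Since $(S,d)$ is compact with bounded integral curvature and no cusps, it is locally Ahlfors $2$-regular and supports a Poincar\'e inequality, so the metric measure potential theory of~\cite{bb} solves the Dirichlet problem: fixing $z_0\in\Om$ and a small $d$-ball $B=B(z_0,R)\subset\Om$ there is $u\in C(\overline B)$ with $u=f$ on $\partial B$ and $\Delta_S u=0$ in $B$. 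Because harmonicity is a conformal invariant in dimension two, $u$ is also $h$-harmonic, hence smooth in the smooth structure of $(S,h)$. Setting $\phi=f-u$ we obtain $\phi\in C(\overline B)$ with $\phi|_{\partial B}\equiv 0$, and it suffices to rule out $\phi\not\equiv 0$; so assume $\phi(y)<0$ for some $y\in B$.

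Next I would introduce the barrier $F(z)=\phi(z)+\tfrac{\phi(y)}{2}\big(\psi(z)-R^2\big)/R^2$, with $\psi(z)=\dist_S(z,z_0)^2$ (or, to avoid regularity issues of $\dist_S$, the $h$-smooth surrogate $\dist_h(z,z_0)^2$), arranged so that $F|_{\partial B}\ge 0$ and $F(y)<0$. Then $F$ attains its minimum over $\overline B$ at an interior point $y_m\in B$, whence $\Delta_r F(y_m)\ge 0$ for all small $r$. Exactly as in~\eqref{BPZ-est11} this reads $0\le \Delta_r f(y_m)-\Delta_r u(y_m)+\tfrac{\phi(y)}{2R^2}\Delta_r\psi(y_m)$. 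The contradiction follows once I show, as $r\to0^{+}$: (i) $\Delta_r f(y_m)\to0$, which is the hypothesis; (ii) $\Delta_r u(y_m)\to 0$; and (iii) $\Delta_r\psi(y_m)\to c_{y_m}$ for some constant $c_{y_m}>0$. Granting these, the middle term vanishes, the last term tends to $\tfrac{\phi(y)}{2R^2}c_{y_m}<0$, and the inequality $0\le$ (negative number) is absurd.

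Both (ii) and (iii) rest on a single pointwise asymptotic: for a function $g$ that is smooth in the $(S,h)$-structure one expects $\lim_{r\to0^{+}}\Delta_r g(x)=C(x)\,\Delta_S g(x)$ with $C(x)>0$. Away from the (at most countable) set of conical points, where the conformal factor is regular, the metric $e^{2v}h$ behaves like a smooth Riemannian metric and this is precisely the second-order expansion used in Proposition~\ref{BPZ-thm2} (with $C(x)=\tfrac1{8}$), which also delivers $\Delta_S\psi>c>0$ for $R$ small via the Laplacian comparison for $\dist^2$. The main obstacle is therefore the behaviour at a conical point $x_0$, where the measure and metric degenerate from the smooth model and $e^{-2v}$ may vanish. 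Here I would argue intrinsically: since $\om(\{x_0\})<2\pi$, the tangent cone at $x_0$ is a metric cone of strictly positive angle, and I would compute the $r$-average directly on this cone, using its rotational symmetry to annihilate the first-order term while the second-order term contributes $C(x_0)>0$; this simultaneously forces $\Delta_r u(y_m)\to0$ and keeps $c_{y_m}>0$ in (iii).

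To finish I would verify that $y_m$, even when it happens to be a conical point, still satisfies (iii) with $c_{y_m}>0$ through this cone computation; alternatively, one may slightly perturb $R$ or replace $\psi$ by an $h$-smooth barrier with strictly positive $\Delta_S$ so that the minimum is attained at a regular point, where the smooth expansion of Proposition~\ref{BPZ-thm2} applies verbatim. Since $z_0\in\Om$ was arbitrary, $\phi\equiv0$ on every such $B$, i.e. $f=u$ is $\Delta_S$-harmonic throughout $\Om$.
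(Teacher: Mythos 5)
Your proposal follows, in outline, exactly the paper's route: the paper likewise invokes the Reshetnyak--Huber theorem (Theorem~\ref{BIC-riem}) to write $d=d_{h,v}$, uses \eqref{Lapl-Belt-AlexS} to reduce to a conformally flat chart, builds the same Privaloff barrier from a squared distance taken \emph{in the chart} (so that $\Delta_S\psi=e^{-2v}\Delta_{\R^2}\psi=c\,e^{-2v}>0$ pointwise), and then simply declares that the reasoning follows the proof of Proposition~\ref{BPZ-thm2} verbatim. Up to that point your write-up matches the paper, and your observation that conformal invariance lets one solve the Dirichlet problem for $\Delta_h$ directly is consistent with the paper's definition of harmonicity on $(S,d)$.

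The one place where you go beyond the paper --- the analysis at conical points --- contains a claim that is false as stated. Let $x_0$ be an atom of cone angle $2\pi\beta$ with $0<\beta<1$ (a positive curvature atom; the no-cusp hypothesis $\om(\{x_0\})<2\pi$ only forbids $\beta=0$, so such points are allowed). In the chart the density is $e^{2v}\approx|z|^{2(\beta-1)}$, which is radial, and the metric ball $B_r(x_0)$ is comparable to the chart disc of radius $\rho=(\beta r)^{1/\beta}$. Your symmetry argument does kill the first-order term of a chart-smooth barrier, and it correctly gives $\Delta_r u(x_0)\to 0$ for the chart-harmonic $u$; but the second-order term of $\psi$ averages to $\tfrac{\beta}{\beta+1}\rho^2\asymp r^{2/\beta}$, so that $\Delta_r\psi(x_0)\asymp r^{2/\beta-2}\to 0$ when $\beta<1$. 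Thus $c_{x_0}=0$, not positive --- consistent with $\Delta_S\psi(x_0)=c\,e^{-2v(x_0)}=0$ --- and if the minimum $y_m$ lands on such an atom the key inequality \eqref{BPZ-est11} degenerates to $0\le 0$ with no contradiction. Your proposed escapes do not repair this: any $h$-smooth barrier suffers the same degeneration (the culprit is the factor $e^{-2v}$, not the choice of $\psi$); the intrinsic barrier $\dist_S(\cdot,z_0)^2$ has a genuine first-order kink at the tip, and on the model cone its $r$-laplacian at $x_0$ behaves like $-C\,d(x_0,z_0)\,r^{-1}\to-\infty$, which after multiplication by $\phi(y)/2R^2<0$ pushes \eqref{BPZ-est11} in the \emph{wrong} direction; and slightly perturbing $R$ gives no control over whether $y_m$ avoids the (countable, possibly dense) atom set, since $F$ is merely continuous. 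To be fair, the paper's own proof is a four-line sketch that silently assumes the pointwise expansion of Proposition~\ref{BPZ-thm2} holds at $y_m$ and never confronts this point (nor the fact that $v$ need not be continuous even off the atoms, which the expansion $\Delta_rg\to\tfrac18\Delta_Sg$ at non-atomic points would require); but your version makes the positivity $c_{y_m}>0$ explicit, and at atoms of positive curvature that step fails, so a genuinely new argument (e.g.\ excluding the atoms from the minimum set, or a barrier with matching cone-type singular expansion) would be needed to close it.
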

The proof is similar to the one for Proposition~\ref{BPZ-thm2} and thus we will comment on the key difference only. As in the Riemannian case we argue by contradiction and localize the discussion on a ball $B(z_0, R)\subset \Om$. Then, one considers functions $\phi$ and $F$ similar as in the proof of the above theorem. By the Reschetnyak--Huber theorem we reduce the discussion to the plane with conformally flat metric and by~\eqref{Lapl-Belt-AlexS} get that
$$
\Delta_{S} \dist_S(z,z_0)^2=e^{-2u}\Delta_{\R^2} \dist(z,z_0)^2=2e^{-2u}>0.
$$
From this point, the reasoning follows strictly the proof of Proposition~\ref{BPZ-thm2}.

\subsubsection*{The setting of Alexandrov spaces}

Another setting where a BPZ type theorem can be proven is the setting of Alexandrov spaces. We say that $M$ is an Alexandrov space, if it is locally compact complete length metric space with the lower curvature bound defined via comparison of geodesic triangles in $M$ to geodesic triangles in space form of the appropriate constant sectional curvature, see for instance~\cite{bh, bbi, os}. The Hausdorff dimension of an Alexandrov space turns out to be always an integer and it defines the dimension of the given Alexandrov space. Moreover, a point in an $n$-dimensional Alexandrov space $M$ is called regular, if its tangent cone is isometric to Euclidean $n$-space with standard metric, see e.g. Section 7.1 in~\cite{ab}. A point which is not regular is called singular and it turns out that the Hausdorff dimension of the set of all singular points in $M$ does not exceed $n-1$. Furthermore, the set of all regular points is contained in a $DC_0$ Riemannian manifold, see Definitions 3.1 and 2.1 in~\cite{ab}.

 Let $\Om\subset M$ be a bounded domain in Alexandrov space $M$. For a semiconcave function $u:\Om\to\R$ (see Preliminaries in~\cite{Zhang-Zhu}) a natural Laplacian $\mathcal{L}_u$ defined via the Dirichlet form is a signed Radon measure and its nonsingular part in the Lebesgue decomposition of $\mathcal{L}_u$ satisfies
\begin{equation}\label{Lapl-Perelman}
\Delta u(x)=n\vint_{\Sigma_x} H_xu(\xi,\xi)d\xi,
\end{equation}
for almost all points $x\in \Om$, where $H_x$ denotes the Perelman hessian while $\Sigma_x$ stands for the space of directions at $x$, see~\cite{Petrunin} and \cite[Preliminaries]{Zhang-Zhu}. One of the key features of hessian $H_x$ is that for semiconcave functions in $\Om$ (or more general $DC$ functions, see~\cite{ab}) one can prove the existence of the second order Taylor expansion based on $H_x$, see \cite[Formula (2.6)]{Zhang-Zhu} and~\cite[Proposition 7.5]{ab}.  

The above observations allow us to state the following variant of the BPZ theorem in Alexandrov spaces. 

\begin{prop}[The Blaschke-Privaloff-Zaremba theorem in Alexandrov spaces]\label{BPZ-thm3}
Let $u:\Om\to \R$ be a semiconcave function defined on a domain $\Om$ of Alexandrov space $M$. Then for all $x$ in the set of regular points of $u$ (i.e. for almost all points in $M$) it holds that
\[
 \lim_{r\to 0^+}\Delta_ru(x)=\frac{1}{2(n+2)} \Delta u(x).
\] 
In particular, a semiconcave function $u$ on $\Om$ is harmonic (in the sense of the Laplacian $\Delta$) if and only if $\lim_{r\to 0^+}\Delta_ru(x)=0$ for every regular point $x$ of $u$.
\end{prop}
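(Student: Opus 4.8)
The plan is to read off the limit of $\Delta_r u(x)$ at a regular point $x$ of $u$ directly from the second order Taylor expansion of $u$, and then to match the resulting expression with $\Delta u(x)$ through the averaged-Hessian formula \eqref{Lapl-Perelman}. Fix a regular point $x$ of $u$; then the tangent cone of $M$ at $x$ is isometric to Euclidean $\R^n$, and by \cite[Formula (2.6)]{Zhang-Zhu} (see also \cite[Proposition 7.5]{ab}) the semiconcave function $u$ admits, for $y$ near $x$ and $\xi_y=\exp_x^{-1}(y)$ under the identification of the tangent cone with $\R^n$, the expansion
\[
u(y)=u(x)+\langle\nabla u(x),\xi_y\rangle+\tfrac12 H_xu(\xi_y,\xi_y)+o(d(x,y)^2),
\]
where $H_xu$ is the Perelman Hessian and $d(x,y)=|\xi_y|$.

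First I would average this identity over the ball $B_r(x)$ and divide by $r^2$, obtaining
\[
\Delta_r u(x)=\frac{1}{r^2}\vint_{B_r(x)}\langle\nabla u(x),\xi_y\rangle\ud\mu(y)+\frac{1}{2r^2}\vint_{B_r(x)}H_xu(\xi_y,\xi_y)\ud\mu(y)+o(1).
\]
A blow-up argument then enters: rescaling by $r^{-1}$, the normalized restriction of $\mu$ to $B_r(x)$ converges, as $r\to 0$, to normalized Lebesgue measure on the unit ball $B_1\subset\R^n$ (this is where regularity of $x$, i.e. the Euclidean tangent cone and unit density, is used). Under this convergence the first-order term contributes $o(1)$, since the Euclidean unit ball is symmetric under $\xi\mapsto-\xi$, while for the second-order term the scaling $\xi=r\eta$ together with the elementary identity $\vint_{B_1}\eta_i\eta_j\ud\eta=\delta_{ij}/(n+2)$ gives
\[
\frac{1}{2r^2}\vint_{B_r(x)}H_xu(\xi_y,\xi_y)\ud\mu(y)=\frac12\vint_{B_1}H_xu(\eta,\eta)\ud\eta+o(1)=\frac{\operatorname{tr}H_xu}{2(n+2)}+o(1).
\]

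Next I would identify $\operatorname{tr}H_xu$ with $\Delta u(x)$. Averaging the quadratic form over the space of directions $\Sigma_x=S^{n-1}$ gives $\vint_{\Sigma_x}H_xu(\xi,\xi)\ud\xi=\operatorname{tr}H_xu/n$, so \eqref{Lapl-Perelman} reads $\Delta u(x)=\operatorname{tr}H_xu$. Combining this with the previous display yields $\Delta_r u(x)\to\frac{1}{2(n+2)}\Delta u(x)$, which is the asserted formula. The final equivalence is then immediate: if $\lim_{r\to 0^+}\Delta_r u(x)=0$ at every regular point, then $\Delta u=0$ almost everywhere on $\Om$, hence $u$ is harmonic, and the converse is clear from the same formula.

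The main obstacle I anticipate is the rigorous control of the blow-up step, specifically the first-order term and the Taylor remainder after averaging: a priori the barycenter of $B_r(x)$ in tangent coordinates and the integrated remainder are only $o(r)$ and $o(r^2)$ respectively, and the delicate point is to ensure they remain $o(r^2)$ and $o(r^2)$ uniformly so that division by $r^2$ is legitimate. This requires that the metric–measure balls $B_r(x)$ be asymptotically Euclidean—in particular asymptotically centrally symmetric—at regular points, which is exactly the content supplied by the second-order Taylor expansion of \cite{Zhang-Zhu,ab} together with the almost-everywhere Euclidean structure of Alexandrov spaces along the $DC_0$-manifold of regular points.
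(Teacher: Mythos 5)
Your proposal is correct and takes essentially the same route as the paper: the authors' entire proof is to invoke the proof of \cite[Lemma 2.3]{Zhang-Zhu}, which is precisely the argument you reconstruct --- averaging the second-order expansion \cite[Formula (2.6)]{Zhang-Zhu} based on the Perelman Hessian $H_xu$, discarding the first-order term at regular points, applying the moment identity $\vint_{B_1}\eta_i\eta_j\,\ud\eta=\delta_{ij}/(n+2)$, and identifying $\operatorname{tr}H_xu$ with $\Delta u(x)$ through \eqref{Lapl-Perelman}. The barycenter/remainder subtlety you flag at the end (weak blow-up convergence alone gives only $o(r)$, not the needed $o(r^2)$) is exactly what the restriction to regular points of $u$ --- smooth points of $M$ at which the second-order expansion of \cite{Zhang-Zhu} and \cite[Proposition 7.5]{ab} holds --- is designed to absorb, so your attribution of that step to the cited structure theory matches the paper's own reliance on \cite{Zhang-Zhu}.
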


The proof of this observation is the immediate consequence of the proof of~\cite[Lemma 2.3]{Zhang-Zhu}. We remark that the theorem shows that for a large class of functions the amv-Laplace harmonic functions coincide with those which are harmonic in Alexandrov spaces with respect to Laplacian $\Delta$. However, notice that semiconcave functions are locally Lipschitz. Therefore, in the setting of Alexandrov spaces a characterization of the amv-harmonic functions stated in Proposition~\ref{BPZ-thm3} holds under stronger assumptions than just a continuity as it is in $\R^n$, Carnot groups and on manifolds (cf., respectively, \cite[Theorem 2.1.5]{llo} and Theorem~\ref{BPZ-thm} and Proposition~\ref{BPZ-thm2}). Finally let us comment that, since Alexandrov spaces without boundary are conjectured to have vanishing mm-boundary (see~\cite[Section 1.3]{klp17}) and this is known to be the case for two-dimensional Alexandrov spaces (Theorem 1.8 in \cite{klp17}), the characterization part of Proposition~\ref{BPZ-thm3} for an Alexandrov space $M$ without the boundary is related to our Theorem~\ref{thm:ncRCD}.

\bibliographystyle{plain}

\end{document}